\newcommand{\R}{{\mathbb R}} 
\renewcommand{\(}{\left(}
\renewcommand{\)}{\right)}
\def \e {\varepsilon}
\def\a{\alpha}
\def\o{\omega}
\newtheorem{theorem}{Theorem}[section]
\newtheorem{lemma}[theorem]{Lemma}
\newtheorem{definition}[theorem]{Definition}
\newtheorem{remark}[theorem]{Remark}
\begin{document}

\title[$L^\infty$-estimates for non quadratic costs, \today]
{$L^\infty$-estimates in optimal transport for non quadratic costs\\ \today}
\author[C. E. Guti\'errez]
{Cristian E. Guti\'errez}
\author[A. Montanari]
{Annamaria Montanari}
\thanks{\today
}
\address{Department of Mathematics\\Temple University\\Philadelphia, PA 19122}
\email{cristian.gutierrez@temple.edu}
\address{Dipartimento di Matematica\\Piazza di Porta San Donato 5\\Universit\`a di Bologna\\40126 Bologna, Italy}
\email{annamaria.montanari@unibo.it}

\maketitle

\begin{abstract}
For cost functions $c(x,y)=h(x-y)$ with $h\in C^2$ homogeneous of degree $p\geq 2$, we show $L^\infty$-estimates of $Tx-x$ on balls, where $T$ is an $h$-monotone map. Estimates for the interpolating mappings $T_t=t(T-I)+I$ are deduced from this. 
\end{abstract}
\tableofcontents

\setcounter{equation}{0}

\section{Introduction}
\setcounter{equation}{0}
This note originates looking into the recent and very interesting paper by M. Goldman and F. Otto \cite{2020-goldman-otto-variational} containing a new proof of the regularity of optimal maps for the Monge problem when the cost is quadratic. 
Our intention has been to investigate the validity of similar results for powers costs  $|x-y|^p$ with $p\geq 2$, and
in that endeavor we came up with local $L^\infty$-estimates for monotone and interpolating maps relative to that cost, inequalities \eqref{eq:main L infinity estimate in a general ball} and \eqref{eq:inclusion T_t^-1 ball contained into a ball}, respectively; these extend \cite[Lemma 3.1]{2020-goldman-otto-variational}. More generally, our estimates hold when the cost is given by a $C^2$ function that is homogeneous of degree $p$.
Since we believe that these estimates may be useful to obtain regularity results for optimal transport when $p\neq 2$, and may have independent interest, it is our purpose to present them here. Moreover, we are able to show that these estimates suffice to prove, with modifications, several important steps in parallel with those carried out in \cite{2020-goldman-otto-variational} toward the super-linear growth as in Prop. 3.3, eq. (3.15) of that paper; we will not provide these details in this note. However, a missing part is a replacement for $p\neq 2$ of the so called quasi-orthogonality property proved in \cite[Step 3, proof of Prop. 3.3]{2020-goldman-otto-variational}.
Recent regularity results for general cost functions are considered in \cite{2020-otto-prodhomme-ried-general-costs} but they do not include the case of non quadratic power costs, see Remark \ref{rmk:for pneq 2 C_4 does not hold}. 
We mention that global $L^\infty$ estimates for optimal maps in terms of the $p$-Wasserstein distance are proved in \cite{2007-bouchitte}.

The note is organized as follows. Section \ref{sec:L-infty-estimates} contains a detailed proof of the $L^\infty$-estimate \eqref{eq:main L infinity estimate in a general ball} on general balls.
In Section \ref{sec:Estimates for the displacement interpolating map}, we introduce a notion of monotonicity \eqref{eq:A} that is equivalent to \eqref{eq:map T is c-monotone h}
and used it to prove in Section \ref{sec:L-infty-estimates of the interpolating map} the estimate \eqref{eq:inclusion T_t^-1 ball contained into a ball} for interpolating maps.
Section \ref{sec:L-infty-estimates of densities} shows, as a consequence, $L^\infty$-estimates for the densities of the transport problem. Section \ref{sec:fluids} shows that the quantity on the right hand side of the $L^\infty$-estimate \eqref{eq:main L infinity estimate in a general ball} is comparable to an integral of a fluid flow. 
Section \ref{sec:diff monotone maps} is self-contained and shows an $L^\infty$-estimate for monotone maps minus an arbitrary affine function,
Lemma \ref{lm:estimates of T with A}, which implies point-wise differentiability of locally integrable monotone maps, see Theorem \ref{thm:differentiability of monotone maps} and Remark \ref{eq:ambrosio result bounded deformation}. 
Finally and for convenience, we include an appendix with the known formula \eqref{eq:third Green identity} which is the starting point to prove the main estimate in Section \ref{sec:L-infty-estimates}.

{\bf Acknowledgements.}
We would like to thank Craig Evans for useful comments and for pointing out Krylov's work \cite{K83}; see Remark \ref{rmk:mignot ambrosio and krylov results}. And we like to thank also Luigi Ambrosio for pointing out the connection between monotone maps and maps of bounded deformation, Remark \ref{eq:ambrosio result bounded deformation}, and useful comments. 
C.E.G was partially supported
by NSF grant DMS--1600578, and A.M. was partially supported by  
a grant from GNAMPA of INdAM.

\section{$L^\infty$-estimates}\label{sec:L-infty-estimates}
If $c(x,y):D\times D^*\to [0,+\infty)$ is a general cost function, then 
from optimal transport theory, the optimal map for the Monge problem is given by $T=\mathcal N_{c,\phi}$ where $\phi$ is $c$-concave and 
\[
\mathcal N_{c,\phi}(x)
=
\left\{m\in D^*:\phi(x)+\phi^c(m)=c(x,m)\right\}
\]
with $\phi^c(m)=\inf_{x\in D}\(c(x,m)-\phi(x)\)$, see for example \cite[Sect. 3.2]{gutierrez-huang:farfieldrefractor}.
This implies that
\begin{equation}\label{eq:c-monotonicty}
c\(x,Tx\)+c\(y,Ty\)\leq c\(x,Ty\)+c\(y,Tx\)
\end{equation}
assuming $Tx$ is single valued for a.e. $x\in D$.
In our analysis below we will only use that $T$ satisfies \eqref{eq:c-monotonicty}; and {\it that $T$ is optimal will not be used}.

We assume that the cost $c$ has the form $c(x,y)=h(x-y)$ where {\it $h\geq 0$ is a $C^2$ convex function in $\R^n$}. 
What we have in mind is to obtain $L^\infty$-estimates for $u(x)=Tx-x$, as in the paper by Goldman and Otto \cite[Lemma 3.1]{2020-goldman-otto-variational}, but when {\it $h$ is positively homogenous of degree $p$ for some $1<p<\infty$}.
For this $c$, \eqref{eq:c-monotonicty} obviously reads
\begin{equation}\label{eq:map T is c-monotone h}
h\(x-Tx\)+ h\(y-Ty\)\leq h\(x-Ty\)+ h\(y-Tx\),
\end{equation}
that is, $T$ is $h$-monotone, or equivalently
\begin{equation}\label{eq:c-monotonicity h}
h\(-u(x)\)+ h\(-u(y)\)\leq h\(x-y-u(y)\)+ h\(y-x-u(x)\).
\end{equation}
Defining 
\[
G(a,b)=h(a-b)-h(a)-h(b),
\]
and assuming that {\it $h$ is even,}
the inequality \eqref{eq:c-monotonicity h} reads
\begin{equation}\label{eq:inequality between G} 
-G\(x-y,u(y)\)\leq G\(y-x,u(x)\)+2\,h(x-y).
\end{equation}

Our purpose is then to prove the following local $L^\infty$-estimate.
\begin{theorem}\label{thm:main Linfty estimate}
Suppose $h\in C^2(\R^n)$ is nonnegative, even, convex, positively homogeneous of degree $p$, for some $p\geq 2$, and $\min_{x\in S^{n-1}} h(x)=m>0$. If $T$ is a map satisfying the monotonicity condition \eqref{eq:map T is c-monotone h} for a.e. $x,y\in \R^n$ and $u(x)=Tx-x$, then      
{\small \begin{equation}\label{eq:main L infinity estimate in a general ball}
\sup_{y\in B_{\beta\,R}(x_0)}|u(y)|
\leq
\begin{cases}
L_1\,R^{n/(n+p)}\,\(\fint_{B_R(x_0)}|u(x)|^p\,dx\)^{1/(n+p)} & 
\text{if $\frac{1}{R^p}\,\fint_{B_R(x_0)}|u(x)|^p\,dx\leq \(\dfrac{1-\beta}{2}\)^{n+p} \dfrac{(p-1)\,C_2}{(n+1)\,C_1\,\omega_n}$}\\
L_2\,\(R^{-1}\,\fint_{B_R(x_0)}|u(x)|^p\,dx\)^{1/(p-1)}  & \text{if $\frac{1}{R^p}\,\fint_{B_R(x_0)}|u(x)|^p\,dx\geq \(\dfrac{1-\beta}{2}\)^{n+p} \dfrac{(p-1)\,C_2}{(n+1)\,C_1\,\omega_n}$},	
\end{cases}
\end{equation}}
for each $R>0$, $x_0\in \R^n$, and $0<\beta <1$ with positive constants $C_1,C_2$ depending only on $p,n$ and $h$, with $\omega_n=|B_1|$; and with $L_1$ depending only on $p,n$ and $h$, and $L_2$ depending only on $p,n,h$ and $\beta$.

\end{theorem}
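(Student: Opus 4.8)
The plan is to start from the pointwise monotonicity inequality \eqref{eq:inequality between G}, which after expanding $G(a,b)=h(a-b)-h(a)-h(b)$ becomes a relation controlling $h(-u(x))$ by $h$ of translates. The first step is to fix a point $y$ where $|u(y)|$ is close to the supremum over $B_{\beta R}(x_0)$, and to average the monotonicity inequality \eqref{eq:c-monotonicity h} against $x$ ranging over $B_R(x_0)$. By convexity and $C^2$ homogeneity of degree $p$ we have the two-sided bounds $m|v|^p\le h(v)\le M|v|^p$ with $M=\max_{S^{n-1}}h$, together with a Taylor expansion $h(a-b)=h(a)-\nabla h(a)\cdot b + O(|b|^2(|a|+|b|)^{p-2})$; the nonnegative term $-G(y-x,u(x))=-h(y-x-u(x))+h(y-x)+h(u(x))$ on the right is what we must bound from above after integration, and this is where the hypothesis $\fint_{B_R}|u|^p$ being small (or large) splits the two cases.

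The second step is the appendix formula \eqref{eq:third Green identity}, a third Green-type identity, which lets us represent $h(-u(y))$ (or a smoothed/averaged version of it) through a volume integral over $B_R(x_0)$ of quantities involving $u$ and a Newtonian-type kernel, plus boundary terms that are controlled because $h\in C^2$. This is the mechanism by which an $L^\infty$ bound at a single point $y$ gets traded for an $L^p$ average on the larger ball: one estimates the kernel convolution by Hölder's inequality, producing the factor $R^{n/(n+p)}$ against $(\fint_{B_R}|u|^p)^{1/(n+p)}$ after optimizing over an intermediate radius. The two regimes in \eqref{eq:main L infinity estimate in a general ball} arise from balancing the "good" term (scaling like $|u(y)|^p$, i.e. the genuinely degree-$p$ part) against the "error" term from the Taylor remainder (scaling like $|u(y)|\cdot(\text{lower order})$, hence the exponent $1/(p-1)$ in the second case): when $\fint|u|^p/R^p$ is below the stated threshold the degree-$p$ term dominates and one gets the first bound, otherwise the remainder dominates and one gets the second. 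The constants $C_1,C_2$ will be the implicit constants from \eqref{eq:third Green identity} and from the ellipticity $m|v|^p\le h(v)\le M|v|^p$, and $L_2$ picks up $\beta$-dependence through the ratio $(1-\beta)/2$ measuring the gap between the two concentric balls.

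The third step is the optimization: choose the free intermediate radius $r\in(\beta R, R)$, or equivalently the cutoff in the Green representation, to balance the volume term against the term carrying $\sup|u|$ itself (which appears on the right because $-G(y-x,u(x))$ also involves $u(y)$ implicitly through the translation $y-x-u(x)$ when $|u(x)|$ is comparable to $|u(y)|$); solving the resulting algebraic inequality $|u(y)|^p \lesssim |u(y)|^{p} \cdot(\text{small}) + R^{-n}\fint_{B_R}|u|^p \cdot R^{n}$-type relation for $|u(y)|$ yields the two stated powers. I expect the main obstacle to be controlling the non-quadratic Taylor remainder: for $p=2$ the remainder vanishes and \eqref{eq:inequality between G} is exact, which is why \cite[Lemma 3.1]{2020-goldman-otto-variational} has a clean single-case statement, whereas for $p>2$ one must carefully track the term of order $|u|^2(|x-y|+|u|)^{p-2}$, absorb the part involving $\sup|u|$ into the left side (which is only possible under the smallness hypothesis, forcing the dichotomy), and estimate the genuinely lower-order leftover in the second regime — ensuring throughout that the constants depend only on $p,n,h$ (and $\beta$ for $L_2$) and not on $R$ or $u$, which requires using homogeneity to reduce to the unit ball by scaling $u_R(z)=u(x_0+Rz)/R$ before doing any of the above.
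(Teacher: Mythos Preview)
Your proposal identifies the right raw ingredients --- the monotonicity inequality \eqref{eq:inequality between G} and the Green-type identity \eqref{eq:third Green identity} --- but it is missing the central geometric device that makes the argument work, and your account of the dichotomy is not the actual mechanism.

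The paper does \emph{not} average \eqref{eq:inequality between G} over $x\in B_R(x_0)$, nor use \eqref{eq:third Green identity} to represent $h(-u(y))$. Instead it fixes $y$, sets $\omega=u(y)/|u(y)|$ and $r=\delta\,|u(y)|$ with $\delta>0$ small, and applies \eqref{eq:third Green identity} to the function $v(x)=-G(x-y,u(y))$ on the \emph{shifted} small ball $B_r(y+r\omega)$. Evaluating at the center gives $-G(r\omega,u(y))$, and the point of the shift is that this particular quantity has the lower bound $\tfrac{m}{2}\delta|u(y)|^p$ for $\delta$ small, by a direct computation using homogeneity and convexity. The Laplacian correction term $B$ in \eqref{eq:third Green identity} is then shown, again by homogeneity and a change of variables, to satisfy $B=n|u(y)|^p\delta F(\delta)$ with $F(\delta)\to 0$, so it is absorbed into the left side, yielding $\tfrac{m}{4}|u(y)|^p\delta\le A$ where $A$ is the average term. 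Only \emph{then} does one invoke \eqref{eq:inequality between G} inside $A$, and since $B_r(y+r\omega)\subset B_R(x_0)$ one bounds $A\lesssim r^{-n}\int_{B_R}|u|^p + r^p$. Substituting $r=\delta|u(y)|$ and dividing produces the clean inequality
\[
|u(y)|^{p-1}\le \dfrac{C_1}{r^{n+1}}\int_{B_R(x_0)}|u|^p + C_2\,r^{p-1}=:H(r),\qquad 0<r\le\tfrac{1-\beta}{2}R.
\]
The two cases in the statement come simply from minimizing $H(r)$ over this interval: either the unconstrained minimizer $r_0=\bigl(\tfrac{(n+1)C_1\Delta}{(p-1)C_2}\bigr)^{1/(n+p)}$ lies inside the interval (small-energy case) or it does not (large-energy case). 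There is no Taylor-remainder absorption of a $\sup|u|$ term on the right, and the parameter $r$ ranges over $(0,(1-\beta)R/2]$, not $(\beta R,R)$. If you average over all of $B_R(x_0)$ as you propose, the left side $\fint(-G(x-y,u(y)))\,dx$ has no obvious sign or lower bound in terms of $|u(y)|^p$, so the argument stalls at the very first step.
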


\begin{proof}

Our goal is to estimate the supremum of $|u|$ over a ball by the $L^p$-norm of $u$ over a slightly larger ball. To do this, the idea is to use \eqref{eq:third Green identity} and estimate the integrals by integrating \eqref{eq:inequality between G} in $x$.

In fact, let us set $\omega=\dfrac{u(y)}{|u(y)|}$ and $r=\delta \,|u(y)|$, with $\delta>0$ to be chosen; $u(y)\neq 0$.
Applying the identity \eqref{eq:third Green identity} with $v(x)\leadsto -G(x-y,u(y))$ and the ball $B_r(y)\leadsto B_r(y+r\,\omega)$
yields
\begin{align}\label{eq:main formula A+B}
&v(y+r\,\omega)=-G(r\,\omega ,u(y))\notag\\
&=-\fint_{B_r(y+r\,\omega)}G\(x-y,u(y)\)\,dx\notag \\
&\qquad +\dfrac{n}{r^n}\,\int_0^r \rho^{n-1}\int_{|x-y-r \omega|\leq \rho}\(\Gamma(x-y-r\omega)-\Gamma(\rho)\)\,\Delta_x\(-G\(x-y,u(y)\)\) \,dx\,d\rho\notag\\
&=A+B.
\end{align}

We first estimate the left hand side of \eqref{eq:main formula A+B}
from below.
Write
\begin{align*}
&-G(r\,\omega ,u(y))\\
&=-G(\delta u(y), u(y))=h(\delta\,u(y)) +h(u(y))-h\(\delta\,u(y)-u(y)\)\\
&=\delta\,\(\dfrac{h(\delta\,u(y))}{\delta} +\dfrac{h(u(y))-h\(\delta\,u(y)-u(y)\)}{\delta}\)\\
&=\delta\,\(\dfrac{h(\delta\,u(y))}{\delta} +\dfrac{h(-u(y))-h\(\delta\,u(y)-u(y)\)}{\delta}\)\quad \text{since $h$ is even}\\ 
&=\delta\,\(\dfrac{h(\delta\,u(y))}{\delta} +\dfrac{\nabla h\(\xi\)\cdot -\delta\,u(y)}{\delta}\),\quad \text{with $\xi$ an intermediate point between $-u(y)$ and $\delta u(y)-u(y)$}.
\end{align*}
Since $h$ is smooth and homogenous of degree $p>1$, i.e., $h(\lambda x)=\lambda^p\,h(x)$ for $\lambda>0$, it follows that $\nabla h(\lambda x)=\lambda^{p-1}\,\nabla h(x)$ and so
\begin{align*}
\dfrac{h(\delta\,u(y))}{\delta} +\dfrac{\nabla h\(\xi\)\cdot -\delta\,u(y)}{\delta}
&=
\dfrac{h\(\delta |u(y)|\,\dfrac{u(y)}{|u(y)|}\)}{\delta}
-\nabla h\(\xi\)\cdot u(y)\\
&=
\delta^{p-1}\,|u(y)|^p\,h\(\dfrac{u(y)}{|u(y)|}\)-\nabla h\(|\xi|\dfrac{\xi}{|\xi|}\)\cdot u(y)\\
&=
\delta^{p-1}\,|u(y)|^p\,h\(\dfrac{u(y)}{|u(y)|}\)-|\xi|^{p-1}\nabla h\(\dfrac{\xi}{|\xi|}\)\cdot u(y)\\
&=
\delta^{p-1}\,|u(y)|^p\,h\(\dfrac{u(y)}{|u(y)|}\)-|u(y)|^p\,\(\dfrac{|\xi|}{|u(y)|}\)^{p-1}\nabla h\(\dfrac{\xi}{|\xi|}\)\cdot \dfrac{u(y)}{|u(y)|}\\
&=
|u(y)|^p\,\(\delta^{p-1}\,h\(\dfrac{u(y)}{|u(y)|}\)-\(\dfrac{|\xi|}{|u(y)|}\)^{p-1}\nabla h\(\dfrac{\xi}{|\xi|}\)\cdot \dfrac{u(y)}{|u(y)|}\):=|u(y)|^p\,f(\delta,y).
\end{align*}
If $\delta\to 0^+$ we get $\xi\to -u(y)$ and
\[
f(\delta,y)=\delta^{p-1}\,h\(\dfrac{u(y)}{|u(y)|}\)-\(\dfrac{|\xi|}{|u(y)|}\)^{p-1}\nabla h\(\dfrac{\xi}{|\xi|}\)\cdot \dfrac{u(y)}{|u(y)|}\to 
-\nabla h\(\dfrac{-u(y)}{|u(y)|}\)\cdot \dfrac{u(y)}{|u(y)|}.
\]
Since $h$ is convex, then for each $x_0$ and $x$ we have $h(x)\geq h(x_0)+\nabla h(x_0)\cdot (x-x_0)$. Applying this inequality with $x_0=\dfrac{-u(y)}{|u(y)|}$ and $x=0$ yields
\[
h(0)\geq h\(\dfrac{-u(y)}{|u(y)|}\)+\nabla h\(\dfrac{-u(y)}{|u(y)|}\)\cdot \dfrac{u(y)}{|u(y)|}
\]
and since $h(0)=0$,
\[
h\(\dfrac{-u(y)}{|u(y)|}\)\leq -\nabla h\(\dfrac{-u(y)}{|u(y)|}\)\cdot \dfrac{u(y)}{|u(y)|}.
\]
{\it If $h$ is strictly positive in the unit sphere, then 
$$0<m=\min_{x\in S^{n-1}} h(x)\leq M=\max_{x\in S^{n-1}} h(x)$$ 
by continuity}.
Therefore we get the inequality 
\[
0<m\leq -\nabla h\(\dfrac{-u(y)}{|u(y)|}\)\cdot \dfrac{u(y)}{|u(y)|}\leq \max_{x\in S^{n-1}}|\nabla h(x)|.
\]
We next show that $f(\delta,y)\to -\nabla h\(\dfrac{-u(y)}{|u(y)|}\)\cdot \dfrac{u(y)}{|u(y)|}$ as $\delta\to 0^+$ uniformly in $y\neq 0$.
In fact,
\begin{align*}
f(\delta,y)+\nabla h\(\dfrac{-u(y)}{|u(y)|}\)\cdot \dfrac{u(y)}{|u(y)|}
&=\delta^{p-1}\,h\(\dfrac{u(y)}{|u(y)|}\)\\
&\qquad  -\(\dfrac{|\xi|}{|u(y)|}\)^{p-1}\nabla h\(\dfrac{\xi}{|\xi|}\)\cdot \dfrac{u(y)}{|u(y)|}
+
\nabla h\(\dfrac{-u(y)}{|u(y)|}\)\cdot \dfrac{u(y)}{|u(y)|}=D_1+D_2.
\end{align*}
We have $D_1\leq M\,\delta^{p-1}$, and from the homogeneity of $\nabla h$
\begin{align*}
D_2
&=
-\nabla h\(\dfrac{\xi}{|u(y)|}\)\cdot \dfrac{u(y)}{|u(y)|}+\nabla h\(\dfrac{-u(y)}{|u(y)|}\)\cdot \dfrac{u(y)}{|u(y)|},
\end{align*}
so
\[
|D_2|\leq \left|\nabla h\(\dfrac{\xi}{|u(y)|}\)-\nabla h\(\dfrac{-u(y)}{|u(y)|}\)\right|.
\]
Since $\xi$ is an intermediate point between $-u(y)$ and $\delta u(y)-u(y)$,
$\xi=-u(y)+t\,\delta \,u(y)$ for some $0<t<1$, so 
$\left| \dfrac{\xi}{|u(y)|}-\dfrac{-u(y)}{|u(y)|}\right|<\delta$. Since $\nabla h$ is uniformly continuous in a neighborhood of $S^{n-1}$ the uniform convergence of $f$ follows.

Therefore, we get the following lower bound for the left hand side of \eqref{eq:main formula A+B}:
there exists $\delta_0>0$ depending only on $h$ and independent of $y$ such that 
\begin{equation}\label{eq:lower estimate general cost}
-G(r\,\omega ,u(y))\geq \dfrac{m}{2}\,\delta\,|u(y)|^p, \qquad \text{for $0<\delta<\delta_0$,}
\end{equation}
with $\omega=u(y)/|u(y)|$ and $r=\delta\, |u(y)|$, for each $y$ with $u(y)\neq 0$.
On the other hand, if $\delta\geq \delta_0$, then $\dfrac{r}{|u(y)|}\geq \delta_0$,  implying obviously that $|u(y)|\leq \dfrac{r}{\delta_0}$, and obtaining the bound 
$|u(y)|\leq \dfrac{\alpha}{\delta_0}$ for $0<r\leq \alpha$.

We now turn to estimate the right hand side of \eqref{eq:main formula A+B}.
Let us first calculate $\Delta_z G(z,v)$:
\[
\Delta_z G(z,v)=\Delta h(z-v)-\Delta h(z).
\]
Hence
\[
\Delta_x\(-G\(x-y,u(y)\)\)
=
-(\Delta_zG)(x-y,u(y))
=
\Delta h(x-y)-\Delta h(x-y-u(y)),
\]
and so 
\[
B=
\dfrac{n}{r^n}\,\int_0^r \rho^{n-1}
\int_{|x-y-r \omega|\leq \rho}\(\Gamma(x-y-r\omega)-\Gamma(\rho)\)\,
\(\Delta h(x-y)-\Delta h(x-y-u(y))\) \,dx\,d\rho.
\]
Let us analyze the inner integral
\[
I(\rho,r,y)=
\int_{|x-y-r \omega|\leq \rho}\(\Gamma(x-y-r\omega)-\Gamma(\rho)\)\,
\(\Delta h(x-y)-\Delta h(x-y-u(y))\) \,dx.
\]
Making the change of variables $z=x-y-r\,\omega$ yields
\begin{align*}
I(\rho,r,y)
&=
\int_{|z|\leq \rho}\(\Gamma(z)-\Gamma(\rho)\)\,
\(\Delta h(z+r\,\omega)-\Delta h\(z+r\,\omega-u(y)\)\) \,dz.
\end{align*}
We have that $\Delta h$ is homogenous of degree $p-2$ so 
\[
\Delta h(z+r\,\omega)=
\Delta h\(|z+r\,\omega|\,\dfrac{z+r\,\omega}{|z+r\,\omega|}\)
=
|z+r\,\omega|^{p-2}\,\Delta h\(\dfrac{z+r\,\omega}{|z+r\,\omega|}\).
\]
Write, with $e_1$ a fixed unit vector in $S^{n-1}$, 
\begin{align*}
&\int_{|z|\leq \rho}\(\Gamma(z)-\Gamma(\rho)\)\,
\Delta h(z+r\,\omega)\,dz\\
&=\int_{|z|\leq \rho}\(\Gamma(z)-\Gamma(\rho)\)\,
|z+r\,\omega|^{p-2}\,\Delta h\(\dfrac{z+r\,\omega}{|z+r\,\omega|}\) \,dz\\
&=
\int_{|v|\leq \rho}\(\Gamma(v)-\Gamma(\rho)\)\,
|Tv+r\,Te_1|^{p-2}\,\Delta h\(\dfrac{Tv+r\,Te_1}{|Tv+r\,Te_1|}\) \,dv,\,\text{with $T$ rotation around $0$ with $Te_1=\omega$}\\
&=
\int_{|v|\leq \rho}\(\Gamma(v)-\Gamma(\rho)\)\,
|v+r\,e_1|^{p-2}\,\Delta h\(\dfrac{Tv+r\,Te_1}{|Tv+r\,Te_1|}\) \,dv.
\end{align*}
{\small Similarly,
\begin{align*}
&\int_{|z|\leq \rho}\(\Gamma(z)-\Gamma(\rho)\)\,
\Delta h\(z+r\,\omega-u(y)\) \,dz\\
&=\int_{|z|\leq \rho}\(\Gamma(z)-\Gamma(\rho)\)\,
|z+r\,\omega-u(y)|^{p-2}\,\Delta h\(\dfrac{z+r\,\omega-u(y)}{|z+r\,\omega-u(y)|}\) \,dz\\
&=
\int_{|v|\leq \rho}\(\Gamma(v)-\Gamma(\rho)\)\,
|Tv+r\,Te_1-u(y)|^{p-2}\,\Delta h\(\dfrac{Tv+r\,Te_1-u(y)}{|Tv+r\,Te_1-u(y)|}\) \,dv,\,\text{with $T$ rotation around $0$ with $Te_1=\omega$}\\
&=
\int_{|v|\leq \rho}\(\Gamma(v)-\Gamma(\rho)\)\,
|Tv+r\,Te_1-|u(y)|Te_1|^{p-2}\,\Delta h\(\dfrac{Tv+r\,Te_1-|u(y)|Te_1}{|Tv+r\,Te_1-|u(y)|Te_1|}\) \,dv\\
&=
\int_{|v|\leq \rho}\(\Gamma(v)-\Gamma(\rho)\)\,
|v+(r-|u(y)|)\,e_1|^{p-2}\,\Delta h\(\dfrac{Tv+(r-|u(y)|)Te_1}{|Tv+(r-|u(y)|)Te_1|}\) \,dv,\end{align*}
since $\omega=u(y)/|u(y)|$.
Then
\begin{align*}
I(\rho,r,y)
=
\int_{|v|\leq \rho}\(\Gamma(v)-\Gamma(\rho)\)\,
\(|v+r\,e_1|^{p-2}\,\Delta h\(\dfrac{Tv+r\,Te_1}{|Tv+r\,Te_1|}\)
-
|v+(r-|u(y)|)\,e_1|^{p-2}\,\Delta h\(\dfrac{Tv+(r-|u(y)|)Te_1}{|Tv+(r-|u(y)|)Te_1|}\)
\) \,dv.\end{align*}
We then get 
\begin{align*}
B&=
\dfrac{n}{r^n}\,\int_0^r \rho^{n-1}\,
I(\rho,r,y)\,d\rho=
n\,
\int_0^1t^{n-1}\,I(r\,t,r,y)\,dt.
\end{align*}
}
Now making the change of variables $v=r\zeta$ in the integral $I$ yields
{\small \begin{align*}
&I(r\,t,r,y)\\
&=
\int_{|\zeta|\leq t}
\(\Gamma(r\,\zeta)-\Gamma(r\,t)\)\,
\(|r\,\zeta+r\,e_1|^{p-2}\,\Delta h\(\dfrac{T(r\,\zeta)+r\,Te_1}{|T(r\,\zeta)+r\,Te_1|}\)
-
|r\,\zeta+(r-|u(y)|)\,e_1|^{p-2}\,\Delta h\(\dfrac{T(r\,\zeta)+(r-|u(y)|)Te_1}{|T(r\,\zeta)+(r-|u(y)|)Te_1|}\)
\) \,r^n\,d\zeta\\
&=r^p\,\int_{|\zeta|\leq t}
\(\Gamma(\zeta)-\Gamma(t)\)\,
\(|\zeta+e_1|^{p-2}\,\Delta h\(\dfrac{T(\zeta)+Te_1}{|T(\zeta)+Te_1|}\)
-
|\zeta+(1-|u(y)|/r)\,e_1|^{p-2}\,\Delta h\(\dfrac{T(\zeta)+(1-|u(y)|/r)Te_1}{|T(\zeta)+(1-|u(y)|/r)Te_1|}\)
\) \,d\zeta
\end{align*}
}
and now letting $r=\delta\,|u(y)|$ as before yields 
\begin{align*}
B
&=
n\,
|u(y)|^p\,\delta^p\,\int_0^1t^{n-1}
\int_{|\zeta|\leq t}
\(\Gamma(\zeta)-\Gamma(t)\)\,\\
&\qquad 
\(|\zeta+e_1|^{p-2}\,\Delta h\(\dfrac{T(\zeta)+Te_1}{|T(\zeta)+Te_1|}\)
-
|\zeta+(1-(1/\delta))\,e_1|^{p-2}\,\Delta h\(\dfrac{T(\zeta)+(1-(1/\delta))Te_1}{|T(\zeta)+
(1-(1/\delta))Te_1|}\)
\) \,d\zeta\,dt\\
&=
n\,
|u(y)|^p\,\delta^p\,\int_0^1t^{n-1}
\int_{|\zeta|\leq t}
\(\Gamma(\zeta)-\Gamma(t)\)\,
 |\zeta+e_1|^{p-2}\,\Delta h\(\dfrac{T(\zeta)+Te_1}{|T(\zeta)+Te_1|}\)\,d\zeta\,dt\\
 &\qquad 
 -n\,
|u(y)|^p\,\delta^2\,\int_0^1t^{n-1}
\int_{|\zeta|\leq t}
\(\Gamma(\zeta)-\Gamma(t)\)\,
|\delta\,\zeta+(\delta-1)\,e_1|^{p-2}\,\Delta h\(\dfrac{\delta\,T(\zeta)+(\delta-1)Te_1}{|\delta\,T(\zeta)+
(\delta-1)Te_1|} \)\,d\zeta\,dt \\
&=
n\,
|u(y)|^p\,\delta\,F(\delta),
\end{align*}
where 
\begin{align*}
F(\delta)
&=
\delta^{p-1}\,\int_0^1t^{n-1}
\int_{|\zeta|\leq t}
\(\Gamma(\zeta)-\Gamma(t)\)\,
 |\zeta+e_1|^{p-2}\,\Delta h\(\dfrac{T(\zeta)+Te_1}{|T(\zeta)+Te_1|}\)\,d\zeta\,dt\\
 &\qquad 
 -\delta\,\int_0^1t^{n-1}
\int_{|\zeta|\leq t}
\(\Gamma(\zeta)-\Gamma(t)\)\,
|\delta\,\zeta+(\delta-1)\,e_1|^{p-2}\,\Delta h\(\dfrac{\delta\,T(\zeta)+(\delta-1)Te_1}{|\delta\,T(\zeta)+
(\delta-1)Te_1|} \)\,d\zeta\,dt.
\end{align*}
Since $\Delta h$ is continuous, it is bounded in $S^{n-1}$ and so $F(\delta)\to 0$ uniformly in $y$ as $\delta\to 0^+$ when $p\geq 2$.
Therefore there exists $\delta_1>0$ such that $F(\delta)\leq \dfrac{m}{4\,n}$ for $0<\delta\leq \delta_1$ and so 
\[
B\leq \dfrac{m}{4}\,|u(y)|^p\,\delta
\]
for $0<\delta\leq \delta_1$.
Combining this with \eqref{eq:lower estimate general cost} and \eqref{eq:main formula A+B}
yields the inequality 
\begin{equation}\label{eq:lower bound of A}
\dfrac{m}{4}\,|u(y)|^p\,\delta\leq A,\qquad \text{for $0<\delta<\bar \delta$} 
\end{equation}
with $\bar \delta=\min\{\delta_0,\delta_1\}$ independent of $y$ -depending only on $n,p$ and $h$- and with $r=\delta\,|u(y)|$.

We next estimate $A$ from above. To do this will use \eqref{eq:inequality between G}.
From \eqref{eq:main formula A+B} 
\begin{align*}
A
&=
-\fint_{B_r(y+r\,\omega)}G\(x-y,u(y)\)\,dx
\leq \fint_{B_r(y+r\,\omega)}\(G\(y-x,u(x)\)+2\,h(x-y)\)\,dx\\
&=
\fint_{B_r(y+r\,\omega)}G\(y-x,u(x)\)\,dx
+
2\,\fint_{B_r(y+r\,\omega)}h(x-y)\,dx\\
&=
\fint_{B_r(y+r\,\omega)} \(h(y-x-u(x))-h(y-x)-h(u(x))\)\,dx
+
2\,\fint_{B_r(y+r\,\omega)}h(x-y)\,dx\\
&=
\fint_{B_r(y+r\,\omega)} h(y-x-u(x))\,dx
-
\fint_{B_r(y+r\,\omega)}h(u(x))\,dx
+
\fint_{B_r(y+r\,\omega)}h(x-y)\,dx,\quad \text{since $h$ is even}\\
&\leq
\fint_{B_r(y+r\,\omega)} h(y-x-u(x))\,dx
+
\fint_{B_r(y+r\,\omega)}h(x-y)\,dx,\quad \text{since $h\geq 0$}\\
&=A_1+A_2.
\end{align*}

Let us estimate $A_i$:
\begin{align*}
A_1
&
=
\fint_{B_r(y+r\,\omega)}h\(|y-x-u(x)|\,\dfrac{y-x-u(x)}{|y-x-u(x)|}\)\,dx\\
&=
\fint_{B_r(y+r\,\omega)}|y-x-u(x)|^p\,h\(\dfrac{y-x-u(x)}{|y-x-u(x)|}\)\,dx\\
&\leq
\max_{x\in S^{n-1}}h(x)\,\fint_{B_r(y+r\,\omega)}|y-x-u(x)|^p\,dx\\
&\leq
M\,\fint_{B_r(y+r\,\omega)}2^{p-1}\(|y-x|^p+|u(x)|^p\)\,dx\\
&=
2^{p-1}\,M\,\fint_{B_r(y+r\,\omega)}|y-x|^p\,dx
+
2^{p-1}\,M\,\fint_{B_r(y+r\,\omega)}|u(x)|^p\,dx;
\end{align*}
\begin{align*}
A_2
&=
\fint_{B_r(y+r\,\omega)}h\(|x-y|\,\dfrac{x-y}{|x-y|}\)\,dx
=
\fint_{B_r(y+r\,\omega)}|x-y|^p\,h\(\dfrac{x-y}{|x-y|}\)\,dx
\leq
M\,\fint_{B_r(y+r\,\omega)}|x-y|^p\,dx.
\end{align*}
We then obtain
\[
A\leq
2^{p-1}\,M\,\fint_{B_r(y+r\,\omega)}|u(x)|^p\,dx+\(2^{p-1}+1\)M\,\fint_{B_r(y+r\,\omega)}|x-y|^p\,dx,
\]
with $M=\max_{x\in S^{n-1}} h(x)$.
We have
\begin{align*}
\fint_{B_r(y+r\,\omega)}|x-y|^p\,dx
&=
\dfrac{1}{|B_r(0)|}\int_{|x-y-r\omega|\leq r}|x-y|^p\,dx\\
&=
\dfrac{1}{|B_r(0)|}\int_{|z|\leq 1}|r(z+\omega)|^p\,r^n\,dz\qquad \text{with $rz=x-y-r\omega$}\\
&=
r^p\,\fint_{B_1(0)}|z+\omega|^p\,dz\leq 2^p\,r^p.
\end{align*}
Let us now fix a ball $B_R(x_0)$, and
suppose $y\in B_{\beta\,R}(x_0)$ with $0<\beta<1$, $R>0$. Then 
$B_r(y+r\,\omega) \subset B_R(x_0)$ for $r\leq \dfrac{1-\beta}{2}R$ and so
\[
\fint_{B_r(y+r\,\omega)}|u(x)|^p\,dx
\leq
\dfrac{1}{|B_r(0)|}\,\int_{B_R(x_0)}|u(x)|^p\,dx.
\]
Combining these estimates with the lower bound \eqref{eq:lower bound of A} and the upper bound for $A$ 
we obtain
\[
\dfrac{m}{4}\,|u(y)|^p\,\delta\leq \dfrac{M_1}{r^n}\,\int_{B_R(x_0)}|u(x)|^p\,dx+M_2\,r^p,\qquad \text{for $0<\delta<\bar \delta$} 
\]
with $\bar \delta$ structural constant independent of $y$ and with $r=\delta\,|u(y)|$, for $y\in B_{\beta\,R}(x_0)$ and $r\leq (1-\beta)R/2$; $M_1=2^{p-1}M/\omega_n$, $M_2=2^p(2^{p-1}+1)M$.
Therefore, if $y\in B_{\beta\,R}(x_0)$, $0<r\leq (1-\beta)R/2$, and $\delta=\dfrac{r}{|u(y)|}<\bar \delta$, then we obtain the bound 
\[
|u(y)|^{p-1}
\leq
\dfrac{C_1}{r^{n+1}}\,\int_{B_{R}(x_0)}|u(x)|^p\,dx+C_2\,r^{p-1}:=H(r),
\]
with $C_i$ constants depending only on $p, n$, and $M/m$; $C_1=\dfrac{2^{p+1}}{\omega_n}(M/m)$, $C_2=2^{p+2}(2^{p-1}+1)(M/m)$.
On the other hand, if $y\in B_{\beta\,R}(x_0)$, $0<r\leq (1-\beta)R/2$,  and $\delta=\dfrac{r}{|u(y)|}\geq \bar \delta$, then 
\[
|u(y)|\leq \dfrac{r}{\bar \delta}\leq \dfrac{1-\beta}{2\,\bar \delta}R.
\] 
So for any $y\in B_{\beta\,R}(x_0)$ and any $0<r\leq (1-\beta)R/2$ we obtain
\begin{align*}
|u(y)|
\leq 
\max \left\{H(r)^{1/(p-1)}, \dfrac{r}{\bar \delta} \right\}.
\end{align*}
Since the constant $C_2$ in the definition of $H(r)$ can be enlarged with the last estimate remaining to hold, we can take $C_2$ so that $C_2\geq 1/\bar \delta^{p-1}$ and in this way $H(r)^{1/(p-1)}\geq \dfrac{r}{\bar \delta}$, and so $\max \left\{H(r)^{1/(p-1)}, \dfrac{r}{\bar \delta} \right\}=H(r)^{1/(p-1)}$.
Therefore we obtain the estimate 
\begin{equation}\label{eq:main estimate in r to iterate}
\sup_{y\in B_{\beta\,R}(x_0)}|u(y)|
\leq 
\min_{0<r\leq (1-\beta)R/2} H(r)^{1/(p-1)}.
\end{equation}
Set 
\[
\Delta=\int_{B_R(x_0)}|u(x)|^p\,dx,
\]
so $H(r)= C_1\,\Delta\,r^{-(n+1)}+C_2\,r^{p-1} $. The minimum of $H$ over $(0,\infty)$ is attained at 
\[
r_0=\(\dfrac{(n+1)\,C_1\,\Delta}{(p-1)\,C_2}\)^{1/(n+p)},
\]
$H$ is decreasing in $(0,r_0)$ and increasing in $(r_0,\infty)$, and
\[
\min_{[0,\infty)}H(r)=H(r_0)=
\(\(\dfrac{n+1}{p-1}\)^{-(n+1)/(n+p)}+
\(\dfrac{n+1}{p-1}\)^{(p-1)/(n+p)}\)\,\(C_1\,\Delta\)^{(p-1)/(n+p)}\,C_2^{(n+1)/(n+p)}.
\]
If $r_0<(1-\beta)R/2$, then $\min_{0<r<(1-\beta)R/2} H(r)=H(r_0)$. 
On the other hand, if $r_0>(1-\beta)R/2$, that is, 
$\Delta\geq \(\dfrac{1-\beta}{2}R\)^{n+p} \dfrac{(p-1)\,C_2}{(n+1)\,C_1} :=\Delta_0$, then we have
\begin{align*}
\min_{0<r<(1-\beta)R/2} H(r)&=H\(\dfrac{1-\beta}{2}R\)
=
C_1\,\Delta\,\(\dfrac{1-\beta}{2}R\)^{-(n+1)}+
C_2\,\(\dfrac{1-\beta}{2}R\)^{p-1}\\
&= 
C_1\,\Delta\,\(\dfrac{1-\beta}{2}R\)^{-(n+1)}+
C_2\,\Delta\,\dfrac{1}{\Delta}\,\(\dfrac{1-\beta}{2}R\)^{p-1}\\
&\leq
C_1\,\Delta\,\(\dfrac{1-\beta}{2}R\)^{-(n+1)}
+
\dfrac{n+1}{p-1}\,C_1\,\Delta\,\(\dfrac{1-\beta}{2}R\)^{-(n+1)}\\
&=
C_1\,\dfrac{p+n}{p-1}\,\(\dfrac{1-\beta}{2}R\)^{-(n+1)}\Delta:=K_2\,R^{-(n+1)}\,\Delta.
\end{align*}
We then obtain the following estimate valid for all $0<\beta<1$
\begin{equation}\label{eq:main L infty estimate general cost}
\sup_{y\in B_{\beta\,R}(x_0)}|u(y)|^{p-1}
\leq
\begin{cases}
K_1\,\Delta^{(p-1)/(n+p)} & \text{if $\Delta\leq \Delta_0$}\\
K_2\,R^{-(n+1)}\,\Delta  & \text{if $\Delta\geq \Delta_0$,}	
\end{cases}
\end{equation}
with $K_1= \(\(\dfrac{n+1}{p-1}\)^{-(n+1)/(n+p)}+
\(\dfrac{n+1}{p-1}\)^{(p-1)/(n+p)}\)\,C_1^{(p-1)/(n+p)}\,C_2^{(n+1)/(n+p)}$, 
$K_2=C_1\,\dfrac{p+n}{p-1}\,\(\dfrac{1-\beta}{2}\)^{-(n+1)}$, and
$\Delta= \int_{B_R(x_0)}|u(x)|^p\,dx$.

This completes the proof of the theorem.
\end{proof}

\begin{remark}\rm
Suppose $x_0\in \R^n$, $\displaystyle \lim_{R\to 0^+}\dfrac{1}{R^p}\fint_{B_R(x_0)}|u(x)|^p\,dx=0$ and $x_0$ is a Lebesgue point of $|u(x)|^p$.
Then \eqref{eq:main L infinity estimate in a general ball} implies that $u(x)$ is Lipschitz at $x_0$.
In fact, first notice that since $x_0$ is a Lebesgue point, the condition on the limit implies $u(x_0)=0$. 
Now, pick for example $\beta=1/2$. Then there exists $R_0>0$ such that
\[
\dfrac{1}{R^p}\fint_{B_R(x_0)}|u(x)|^p\,dx
\leq \(\dfrac{1}{4}\)^{n+p}\dfrac{(p-1)\,C_2}{(n+1)\,C_1\,\omega_n},
\quad \text{for $0<R<R_0$}
\]
and so 
$\sup_{B_{R/2}(x_0)}|u(x)|\leq C_0\,R$ from \eqref{eq:main L infinity estimate in a general ball}
for $0<R<R_0$, with $C_0$ a positive constant depending only on $n,p$ and $h$.
If $y\in B_{R_0/2}(x_0)$ and $R=2\,|y-x_0|$, then $|u(y)|\leq \sup_{B_{|y-x_0|}(x_0)}|u(x)|\leq 2\,C_0\,|y-x_0|$. In particular, this implies $|Ty-Tx_0|\leq C\,|y-x_0|$ for $y\in B_{R_0/2}(x_0)$. 
\end{remark}

\section{Estimates for the displacement interpolating map}\label{sec:Estimates for the displacement interpolating map}
\setcounter{equation}{0}
In order to prove the desired estimates we first give a condition equivalent to \eqref{eq:map T is c-monotone h} resembling the classical notion of monotone map.
In fact, from \eqref{eq:map T is c-monotone h} we can write
\begin{align*}
0&\leq  h(y-Tx)-h(y-Ty)-\(h(x-Tx)-h(x-Ty)\)\notag\\
&=\int_0^1 \langle Dh(y-Ty+s(Ty-Tx)), Ty-Tx\rangle ds -\int_0^1 \langle Dh(x-Ty+s(Ty-Tx)), Ty-Tx\rangle ds\notag\\
&=\int_0^1 \langle Dh(y-Ty+s(Ty-Tx))-Dh((x-Ty+s(Ty-Tx)), Ty-Tx\rangle ds\notag\\
&=-  \int_0^1\int_0^1\langle D^2h(x-Ty+s(Ty-Tx)+t (x-y)) (y-x),  (Ty-Tx)\rangle dt\, ds\notag\\
&=- \int_0^1\int_0^1\langle D^2h(y-Ty+s(Ty-Tx)+t (x-y)) (x-y),  (Ty-Tx)\rangle dt\, ds\notag\\
&=  \langle A(x,y) (x-y),  Tx-Ty\rangle.
\end{align*}
Therefore \eqref{eq:map T is c-monotone h} is equivalent to 
\begin{equation}\label{eq:A}
\langle A(x,y) (x-y),  Tx-Ty\rangle\geq 0
\end{equation}
with
\begin{equation}\label{eq:definition of A(x,y)}
A(x,y)=\int_0^1\int_0^1 D^2h(y-Ty+s(Ty-Tx)+t (x-y))dt\, ds.
\end{equation}
Let us analyze the matrix $A(x,y)$. $A(x,y)$ is clearly symmetric, and satisfies $A(x,y)=A(y,x)$ by changing variables in the integral.
If $h$ is homogenous of degree $p$ with $p\geq 2$, then $D^2h(z)$ is homogeneous of degree $p-2$, i.e., $D^2h(\mu\,z)=\mu^{p-2}D^2h(z)$ for all $\mu>0$. 
In addition, if $h$ {\it is strictly convex}, then $D^2h(x)$ is positive definite for each $x\in S^{n-1}$, i.e, there is a constant $\lambda>0$ such that 
\[
\left\langle D^2h(x)\,\xi,\xi \right\rangle\geq \lambda\,|\xi|^2
\]
for all $x\in S^{n-1}$ and all $\xi\in \R^n$. Since $h$ is $C^2$, then there is also 
a positive constant $\Lambda$ such that 
\begin{equation}\label{eq:strict ellipticity of D2h}
\lambda \,|\xi|^2
\leq 
\left\langle D^2h(x)\xi,\xi\right\rangle
\leq
\Lambda \,|\xi|^2,\qquad \forall x\in S^{n-1},\xi\in \R^n.
\end{equation}
We then have
\[
A(x,y)=
\int_0^1 \int_0^1|y-Ty+s(Ty-Tx)+t (x-y)|^{p-2}D^2h\left(\frac{y-Ty+s(Ty-Tx)+t (x-y)}{ |y-Ty+s(Ty-Tx)+t (x-y)|}\right)dt \, ds
\]
and
\begin{equation}\label{eq:ellipticity of A}
\lambda \,\Phi(x,y)\,|\xi|^2
\leq
\left\langle  A(x,y)\,\xi,\xi\right\rangle\leq \Lambda \, \Phi(x,y)\,|\xi|^2\quad \forall \xi \in \R^n,
\end{equation}
with 
\begin{equation}\label{eq:Phi}
 \Phi(x,y)=\int_0^1\int_0^1|y-Ty+s(Ty-Tx)+t (x-y)|^{p-2}dt \, ds.
\end{equation}
We also have that $\Phi(x,y)=0$ if and only if $y-Ty+s(Ty-Tx)+t (x-y)=0$ for all $s,t\in [0,1]$. That is, $\Phi(x,y)=0$ if and only if $y-Ty=0$, $Ty-Tx=0$ and $x-y=0$.
Therefore $\Phi(x,y)>0$ if and only if $Ty\neq y$ or $Ty\neq Tx$ or $x\neq y$.
\begin{remark}\rm\label{rmk:for pneq 2 C_4 does not hold}
If $c(x,y)=|x-y|^p$, then $\nabla_{xy}c(x,y)=-p\,|x-y|^{p-2}\(Id+(p-2)\,\(\dfrac{x-y}{|x-y|}\otimes \dfrac{x-y}{|x-y|}\)\)$ and from the Sherman-Morrison formula
it follows that $\det \nabla_{xy}c(x,y)=(p-1)\(-p\,|x-y|^{p-2}\)^n$. So condition \cite[($C_4$)]{2020-otto-prodhomme-ried-general-costs} does not hold for $p\neq 2$.

\end{remark}
\begin{remark}\rm
To illustrate the notion of $h$-monotonicity, suppose $T$ satisfies \eqref{eq:A} and is $C^1$. Then writing $y=x+\delta\,\o$ with $|\o|=1$ yields
\[
A(x,x+\delta\,\o)=
\iint_{[0,1]^2} D^2h\(x+\delta\,\o-T(x+\delta\,\o)+s(T(x+\delta\,\o)-Tx)+t (-\delta\,\o)\)dt\, ds
\to D^2h\(x-Tx\)
\]
as $\delta\to 0$ and 
\[
\left\langle A(x,x+\delta\,\o) (-\delta\,\o),  Tx-T(x+\delta\,\o)\right\rangle\geq 0.
\]
Dividing the last expression by $\delta^2$ and letting $\delta\to 0$ we obtain
\[
\left\langle D^2h\(x-Tx\) \o,  \dfrac{\partial T}{\partial x}(x)\o\right\rangle\geq 0,
\]
where $\dfrac{\partial T}{\partial x}$ is the Jacobian matrix of $T$ evaluated at $x$.
Since $h$ is $C^2$, the matrix $D^2h$ is symmetric and we get
\[
\left\langle  \o,  D^2h\(x-Tx\)\,\dfrac{\partial T}{\partial x}(x)\o\right\rangle\geq 0
\]
for each unit vector $\o$. 
Therefore, if $T$ is $h$-monotone and $C^1$, the matrix $D^2h\(x-Tx\)\,\dfrac{\partial T}{\partial x}(x)$ is positive semidefinite for each $x$;
notice that $\dfrac{\partial T}{\partial x}(x)$ is not necessarily symmetric.
In particular, when $n=1$, $T$ is $h$-monotone if and only if $T$ is non decreasing.

\end{remark}

\subsection{$L^\infty$-estimates of the interpolating map}\label{sec:L-infty-estimates of the interpolating map}
Let $T$ be a $h$-monotone map, i.e., satisfies \eqref{eq:map T is c-monotone h}, and consider the interpolating map defined by 
\begin{equation}\label{eq:definition of interpolanting map}
T_tx=t\,Tx+(1-t)\,x,\quad 0\leq t\leq 1.
\end{equation}

\begin{theorem}
Suppose the assumptions of Theorem \ref{thm:main Linfty estimate} hold and assume in addition that $h$ is strictly convex.
If the integral $\mathcal E=\int_{B_1(0)}|Tx-x|^p\,dx$ is sufficiently small, then given $0<\beta<1$ there exists $0<\beta <\bar \beta<1$ depending only on $\beta$ and the ellipticity constants $\lambda,\Lambda$ in \eqref{eq:strict ellipticity of D2h} such that 
\begin{equation}\label{eq:inclusion T_t^-1 ball contained into a ball}
T_t^{-1}\(B_\beta (0)\)\subset B_{\bar \beta}(0)\quad \text{for all $0\leq t\leq 1$},
\end{equation}
that is, $\bigcup_{0\leq t\leq 1}T_t^{-1}\(B_\beta (0)\)\subset B_{\bar \beta}(0)$.
\end{theorem}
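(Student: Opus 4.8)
The plan is to show that if $T_t x \in B_\beta(0)$ then $x \in B_{\bar\beta}(0)$ for a suitable $\bar\beta$, by combining the $L^\infty$-estimate of Theorem \ref{thm:main Linfty estimate} with the monotonicity inequality \eqref{eq:A} applied at the specific pair of points $x$ and $0$ (or more precisely at $x$ and a nearby point where $T$ is well controlled). First I would invoke Theorem \ref{thm:main Linfty estimate} on a fixed ball, say $B_1(0)$ with some choice of $\beta' \in (0,1)$: since $\mathcal{E} = \int_{B_1(0)}|Tx-x|^p\,dx$ is assumed small, we land in the first branch of \eqref{eq:main L infinity estimate in a general ball}, and obtain $\sup_{B_{\beta'}(0)}|u| \le L_1 \mathcal{E}^{1/(n+p)} =: \eta$, where $\eta \to 0$ as $\mathcal{E}\to 0$. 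Thus on a fixed ball around the origin $T$ moves points by at most $\eta$. In particular $|u(0)| \le \eta$ (up to the a.e. caveat, which I would handle by working at Lebesgue points or by a limiting argument), so $T0$ is within $\eta$ of $0$, and more generally $T_t 0 = t\,T0 + (1-t)\cdot 0$ is within $\eta$ of $0$ for all $t$.

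Next, the heart of the argument: take $x$ with $T_t x \in B_\beta(0)$ and estimate $|x|$. Apply the monotonicity inequality \eqref{eq:A}, namely $\langle A(x,0)(x-0), Tx - T0\rangle \ge 0$, which rearranges to
\begin{equation*}
\langle A(x,0)\,x,\ Tx - x \rangle + \langle A(x,0)\,x,\ x - T0 \rangle \ge 0,
\end{equation*}
so that $\langle A(x,0)\,x,\ x - T0\rangle \ge -\langle A(x,0)\,x,\ u(x)\rangle$. Writing $T_t x = x + t\,u(x)$, one has $x - T_t x = -t\,u(x)$, and a short computation expresses the quantity $\langle A(x,0)(x - T_t x + (\text{correction})), \ldots\rangle$ in terms of $\Phi(x,0)$ via the ellipticity bound \eqref{eq:ellipticity of A}. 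The key point is that the "good" term $\langle A(x,0)\,x, x\rangle \ge \lambda\,\Phi(x,0)\,|x|^2$ dominates, while the error terms involve $u(x)$ and $T0$, both small (the former by the $L^\infty$-estimate provided $x$ stays in the fixed ball, the latter by the previous paragraph). Dividing through by $\Phi(x,0)$ — which is positive as long as $x \ne 0$ — and using $\lambda/\Lambda$ to absorb the ratio of the ellipticity constants, I expect to arrive at an inequality of the shape $|x| \le \frac{\Lambda}{\lambda}\,\mathrm{dist}(T_t x, \text{something small}) + (\text{small})$, which forces $|x| \le \bar\beta$ with $\bar\beta$ depending only on $\beta$ and $\lambda,\Lambda$, once $\mathcal{E}$ is small enough that $\eta$ is negligible and that the iteration stays inside the fixed ball.

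A technical point deserving care is the bootstrapping on the radius: a priori I only know the $L^\infty$-bound on $B_{\beta'}(0)$, so I must ensure the points $x$ under consideration do not escape that ball; this is a standard continuity/connectedness argument — the set of $t$ for which $T_t^{-1}(B_\beta(0)) \subset B_{\beta'}(0)$ is open and closed and nonempty (it contains $t=0$, where $T_0 = I$), hence all of $[0,1]$ — but it needs $\bar\beta < \beta'$, so I would first fix $\beta'$ close to $1$, derive $\bar\beta$ from the monotonicity estimate, and only then shrink $\mathcal{E}$ to make everything consistent. The main obstacle I anticipate is precisely organizing this self-improvement so that the constant $\bar\beta$ genuinely depends only on $\beta, \lambda, \Lambda$ and not circularly on the region where the $L^\infty$-estimate is valid; handling the a.e. nature of \eqref{eq:A} (the inequality holds for a.e. pair $(x,y)$, not every pair) also requires either passing to a full-measure set and using density, or exploiting that $u \in L^p$ to regularize. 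Once the geometric inequality $|x| \lesssim \frac{\Lambda}{\lambda}\beta + o(1)$ is in hand, the conclusion $\bigcup_{0 \le t \le 1} T_t^{-1}(B_\beta(0)) \subset B_{\bar\beta}(0)$ is immediate since the bound is uniform in $t$.
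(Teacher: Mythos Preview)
Your plan has a genuine gap: applying \eqref{eq:A} at the pair $(x,0)$ does not yield a bound on $|x|$ that is uniform in $t\in(0,1]$. Carry out the computation. With $y:=T_tx\in B_\beta(0)$ one has $Tx=\tfrac{1}{t}\,y+\bigl(1-\tfrac{1}{t}\bigr)x$, and \eqref{eq:A} at $(x,0)$ becomes
\[
0\le \langle A(x,0)\,x,\,Tx-T0\rangle
=\Bigl(1-\tfrac{1}{t}\Bigr)\langle A(x,0)\,x,x\rangle
+\tfrac{1}{t}\langle A(x,0)\,x,y\rangle
-\langle A(x,0)\,x,T0\rangle.
\]
Using \eqref{eq:ellipticity of A}, $|y|<\beta$ and $|T0|\le\eta$, this gives at best $(1-t)\,\lambda\,|x|\le \Lambda(\beta+t\eta)$, i.e.\ $|x|\le \Lambda(\beta+\eta)/\bigl(\lambda(1-t)\bigr)$, which blows up as $t\to 1$ and says nothing at $t=1$. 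The rearrangement you actually wrote, $\langle A(x,0)x,x\rangle\ge -\langle A(x,0)x,u(x)\rangle+\langle A(x,0)x,T0\rangle$, is only a \emph{lower} bound on the ``good'' quadratic term and does not constrain $|x|$ from above. Your fallback---the connectedness bootstrap in $t$---is also not justified: openness of the set $\{t:T_t^{-1}(B_\beta)\subset B_{\beta'}\}$ fails because for $t'$ near $t$ the preimage $T_{t'}^{-1}(B_\beta)$ may contain points $x$ with $|x|\ge\beta'$, on which you have no control of $|u(x)|$; the $L^\infty$-estimate \eqref{eq:main L infinity estimate in a general ball} only applies once $x$ is already inside the fixed ball, so the argument is circular.

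The missing idea is to apply \eqref{eq:A} not at $(x,0)$ but at $(x,z)$, where $z$ is the point on the segment $[y,x]=[T_tx,\,x]$ at distance $r=\beta_0-\beta$ from $y$ (for a fixed $\beta<\beta_0<1$). This choice buys two things simultaneously. First, $z\in B_{\beta_0}(0)$ automatically, so $|Tz-z|\le\eta$ by \eqref{eq:main L infinity estimate in a general ball} with no bootstrap on $x$. Second, $z-y$ and $z-x$ are antiparallel, so the cross term $\tfrac{1}{t}\langle A(x,z)(z-y),z-x\rangle$ is bounded above by $-\tfrac{1}{t}\,\lambda\,\Phi(x,z)\,r\,|z-x|$, with the correct (negative) sign; combined with $\bigl(1-\tfrac{1}{t}\bigr)\langle A(x,z)(z-x),z-x\rangle\le 0$, one gets
\[
0\le |z-x|\Bigl(\Lambda\,\eta-\tfrac{1}{t}\,\lambda\,r\Bigr)
+\Bigl(1-\tfrac{1}{t}\Bigr)\lambda\,\Phi(x,z)\,|z-x|^2,
\]
which forces $z=x$ (hence $|x|<\beta_0$, a contradiction) as soon as $\eta<\tfrac{\lambda}{2\Lambda}\,r$, \emph{uniformly for all} $t\in(0,1]$. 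The dependence of the comparison point on $x$ through the segment $[T_tx,x]$ is precisely what makes the estimate uniform in $t$; a fixed reference point cannot achieve this.
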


\begin{proof}
The inclusion is obvious if $t=0$.
Let $x\in T_t^{-1}\(B_\beta (0)\)$. If $|x|\leq \beta$, then we are done.
Let $\beta<\beta_0<1$, consider the ball $B_{\beta_0}(0)$, and suppose that  $|x|>\beta_0$. 
From \eqref{eq:main L infinity estimate in a general ball} applied in $B_1(0)$, we will show that  is not possible if $\mathcal E$ is sufficiently small, i.e., smaller than $\dfrac{\lambda}{2\,\Lambda}(\beta_0-\beta)$.
We have $y=T_tx\in B_\beta (0)$, and $B_r(y)\subset B_{\beta_0}(0)$ with $r=\beta_0-\beta$.
Let $[y,x]$ be the straight segment between $y$ and $x$, and 
let $z\in \partial B_r(y)\cap [y,x]$. 
So $|z-y|=r$, and $|z|<\beta_0$.
Applying \eqref{eq:A} at $x,z$ yields
\begin{align*}
0&\leq
\left\langle A(x,z) (Tz-Tx), z-x\right\rangle=
\left\langle A(x,z) (Tz-z), z-x\right\rangle
+
\left\langle A(x,z) \(z-Tx\), z-x\right\rangle
\\
&=
\left\langle A(x,z) (Tz-z), z-x\right\rangle
+
\left\langle A(x,z) \(\dfrac1t\,(z-y)+\(1-\dfrac1t\)\,(z-x)\), z-x\right\rangle
\quad \text{since $Tx=\dfrac1t\,y+\(1-\dfrac1t\)\,x$}\\
&=
\left\langle A(x,z) (Tz-z), z-x\right\rangle
+
\dfrac1t\,\left\langle A(x,z) \(z-y\),z-x\right\rangle
+
\(1-\dfrac1t\)\,\left\langle A(x,z) \(z-x\), z-x\right\rangle
\\
&=
\Delta.
\end{align*}
Since $x\neq z$, it follows from \eqref{eq:Phi} that $\Phi(x,z)> 0$.
Also notice that $\left\langle A (z-x), z-y\right\rangle$ is bounded above by a negative quantity, where we have set $A=A(x,z)$.
In fact, since $z$ is on the segment $[y,x]$, the vectors $z-x$ and $z-y$ have opposite directions. That is, there is $\mu<0$ such that $z-y=\mu\,(z-x)$ and so 
$|z-y|=-\mu\,|z-x|$. Then 
\begin{align*}
&\left\langle A (z-x), z-y\right\rangle
=
\mu\,\left\langle A (z-x), z-x\right\rangle\\
&\leq
\lambda\,\mu\,\Phi(x,z)\,|z-x|^2
=
\lambda\,\Phi(x,z)\,\mu\,|z-x|\,|z-x|,\quad \text{from \eqref{eq:ellipticity of A}}\\
&=
-\lambda\,\Phi(x,z)\,|z-y|\,|z-x|
= 
-\lambda\,\Phi(x,z)\,r\,|z-x|.
\end{align*}

If $0< t\leq 1$, then
$1-\dfrac1t\leq 0$ and and once again from \eqref{eq:ellipticity of A}\begin{align*}
0
&\leq
\Delta
\leq
\Lambda \,\Phi(x,z)\,|Tz-z|\,|z-x|
-\dfrac1t\,\lambda\,\Phi(x,z)\,r\,|z-x|
+
\(1-\dfrac1t\)\,
\lambda\,\Phi(x,z)\,|z-x|^2.
\end{align*}
Dividing this inequality by $\Lambda\,\Phi(x,z)$ we obtain
\begin{align*}
0
&\leq
\Delta
\leq
|Tz-z|\,|z-x|
-\dfrac1t\,\dfrac{\lambda}{\Lambda}\,r\,|z-x|
+
\(1-\dfrac1t\)\,
\dfrac{\lambda}{\Lambda}\,|z-x|^2\\
&=
|z-x|\,\(|Tz-z|
-\dfrac1t\,\dfrac{\lambda}{\Lambda}\,r
+
\(1-\dfrac1t\)\,
\dfrac{\lambda}{\Lambda}\,|z-x|\)
\\
&\leq
|z-x|\,\(\epsilon
-\dfrac1t\,\dfrac{\lambda}{\Lambda}\,r
+
\(1-\dfrac1t\)\,
\dfrac{\lambda}{\Lambda}\,|z-x|\)\qquad \text{if $|Tz-z|\leq \epsilon$ from \eqref{eq:main L infinity estimate in a general ball} for $\mathcal E$ small}\\
&\leq
|z-x|\,\(
-\dfrac1t\,\dfrac{\lambda}{2\,\Lambda}\,r
+
\(1-\dfrac1t\)\,
\dfrac{\lambda}{\Lambda}\,|z-x|\)\qquad \text{if $\epsilon\leq \dfrac{\lambda}{2\Lambda}r\(\leq \dfrac{\lambda}{t\,2\,\Lambda}r\)$}\\
&\leq
|z-x|\,\(
-\dfrac1t\,\dfrac{\lambda}{2\,\Lambda}\,r\)\qquad \text{since $1-\dfrac1t\leq 0$.}
\end{align*}
Hence $|z-x|=0$, and therefore $z=x$ obtaining $|x|< \beta_0$, a contradiction.
\end{proof}

We now use this to obtain an estimate for $T^{-1}x-x$, when $T$ is the optimal map for the cost $c(x,y)=h(x-y)$. We have from the theory of optimal transport that 
$
T^{-1}(Tx)=x$ for a.e. $x\in \R^n$.
Then given $0<\beta<1$ we obtain
\begin{align*}
\sup_{y\in B_\beta(0)}|T^{-1}y-y|&=\sup_{T^{-1}(B_\beta(0))}|x-Tx|\\
&\leq
\sup_{B_{\bar \beta}(0))}|x-Tx|\quad \text{from \eqref{eq:inclusion T_t^-1 ball contained into a ball} with $t=1$}\\
&\leq
C\,\(\int_{B_1(0)}|Tx-x|^p\,dx\)^{1/(n+p)}\quad \text{from \eqref{eq:main L infinity estimate in a general ball}}
\end{align*}
for $\mathcal E$ sufficiently small and with $C$ a constant depending only on $ p,n$ and the structural constants of $h$.

\subsection{$L^\infty$-estimates of densities}\label{sec:L-infty-estimates of densities}
We recall that the function $F(A)=\log\(\det A\)$ is concave over the set of matrices $A$ that are positive definite, i.e., $$F\((1-t)\,A+t\,B\)\geq (1-t)\,F(A)+t\,F(B),\quad 0\leq t\leq 1.$$
Exponentiating this yields
\begin{equation}\label{eq:concavity of determinant}
\det\((1-t)\,A+t\,B\)\geq \(\det A\)^{1-t}\(\det B\)^t,\quad 0\leq t\leq 1.
\end{equation}
Let $T$ be a measure preserving map $(\rho_0,\rho_1)$, and let $T_t=t\,T+(1-t)\,Id$ be the interpolating map. Assuming the Jacobian matrix $\nabla T$ is positive definite\footnote{A proof of this may be given along the lines of \cite[Section 5.2, Theorem 5.2.1]{2002-agueh-phdthesis} and \cite[Remark 2.9]{Gutierrez:2007fk}, see also \cite[Theorem 7.28, pp. 272-273]{santambrogio-book} when the differentiability of $c,c^*$ at zero is not assumed. Notice also that if $h$ is homogenous of degree $p$, then $h^*$ is homogenous of degree $q$ with $1/p+1/q=1$.}, we get from \eqref{eq:concavity of determinant} that 
\begin{equation}\label{eq:lower bound of det nabla T_t}
\det \(\nabla T_t\)(x)\geq \(\det \nabla T(x)\)^t.
\end{equation}
Let $\rho_t$ be the measure defined by 
$
\rho_t=\(T_t\)_\# \rho_0,
$
that is, $\rho_t(E)=\int_{\(T_t\)^{-1}(E)} \rho_0(x)\,dx$. Assuming invertibility of the matrices involved, changing variables yields
\[
\int_{\(T_t\)^{-1}(E)} \rho_0(x)\,dx
=
\int_E \rho_0\(\(T_t\)^{-1}z\)\,\dfrac{1}{\det \(\(\nabla T_t\)\(\(T_t\)^{-1}z\)\)}\,dz.
\]
That is, the measure $\rho_t$ has density 
\begin{align}\label{eq:formula for rho(x,t)}
\rho(t,z)&=\rho_0\(\(T_t\)^{-1}z\)\,\dfrac{1}{\det \(\(\nabla T_t\)\(\(T_t\)^{-1}z\)\)}\\
&\leq 
\rho_0\(\(T_t\)^{-1}z\)\,\dfrac{1}{\(\det \(\(\nabla T\)\(\(T_t\)^{-1}z\)\)\)^t}\notag
\end{align}
from \eqref{eq:lower bound of det nabla T_t}.
On the other hand, since $T$ is measure preserving 
\[
\rho_0(x)=\det \(\nabla T(x)\)\,\rho_1(Tx)
\]
which combined with the previous inequality yields
\begin{align*}
\rho(t,z)
&\leq
\rho_0\(\(T_t\)^{-1}z\) \,\(\dfrac{\rho_1\(T\(T_t\)^{-1}z\)}{\rho_0\(\(T_t\)^{-1}z\)}\)^t\\
&=
\rho_0\(\(T_t\)^{-1}z\)^{1-t}\,\rho_1\(T\(T_t\)^{-1}z\)^t.
\end{align*}
From \eqref{eq:main L infinity estimate in a general ball}, $T(B_{r_1}(0))\subset B_{r_2}(0)$ for $0<r_1<r_2<1$, when $\mathcal E=\int_{B_1(0)}|Tx-x|^p\,dx$ is sufficiently small.
And, from \eqref{eq:inclusion T_t^-1 ball contained into a ball}, $T_t^{-1}\(B_\beta(0)\)\subset B_{\bar \beta}(0)$ for some $0<\beta<\bar \beta<1$ uniform for $0\leq t\leq 1$.
Hence $T\(T_t\)^{-1}\(B_\beta(0)\)\subset B_{\beta''}(0)$ for some $0<\beta<\bar \beta<\beta''<1$.
Therefore, assuming that $\rho_0(0)=\rho_1(0)=1$ and $\rho_0,\rho_1$ are H\"older continuous of order $\alpha$, we obtain
\[
\rho_0\(\(T_t\)^{-1}z\)=1+\rho_0\(\(T_t\)^{-1}z\)-1\leq 1+[\rho_0]_{\alpha,1}
\]
and
\[
\rho_1\(T\(T_t\)^{-1}z\)=1+\rho_1\(T\(T_t\)^{-1}z\)-1\leq 1+[\rho_1]_{\alpha,1}
\]
for all $z\in B_\beta(0)$.
Consequently 
\[
\sup_{z\in B_\beta(0)}\rho(t,z)
\leq
\(1+[\rho_0]_{\alpha,1}\)^{1-t}
\,
\(1+[\rho_1]_{\alpha,1}
\)^t;
\]
where $[\rho_i]_{\alpha,1}=\sup_{x,y\in B_1(0),x\neq y}\dfrac{|\rho_i(x)-\rho_i(y)|}{|x-y|^\alpha}$.

\subsection{Connection with fluids}\label{sec:fluids}
The connection between the Monge problem and fluid flows was discovered in 
\cite{2000-benamouandbrenierformula} for quadratic costs. It can be seen that it holds also for general cost functions $h(x-y)$ as above.
Suppose $\rho_i$, $i=1,2$ are given, $v:\R^n\times [0,1]\to \R^n$ is a smooth field, and let $\rho(x,t)$ be a smooth solution of the continuity equation
\[
\partial_t\rho+\text{div}_x \(\rho\,v\)=0 \quad \text{for $(x,t)\in \R^n\times [0,1]$ with $\rho(x,i)=\rho_i(x), i=0,1 $.}
\]
Let $T$ be the optimal map of the Monge problem with cost $h$.
Given the interpolating map $T_tx=t\,Tx+(1-t)\,x$, $0\leq t\leq 1$, consider the field
\[
v(x,t)=\(T-Id\)\(T_t^{-1}x\),
\]
and let $\rho(x,t)$ be solution to the continuity equation above with this $v$.
Define 
\begin{equation}\label{eq:optimal j}
j(x,t)=\rho(x,t)\,\(T-Id\)\(T_t^{-1}x\).
\end{equation}
Then
\begin{align*}
\int_0^1\int_{B_\beta}\dfrac{1}{\rho(x,t)^{p-1}}|j(x,t)|^p\,dxdt
&=
\int_0^1\int_{B_\beta}\left|\(T-Id\)\(T_t^{-1}x\)\right|^p\,\rho(x,t)\,dxdt\\
&=
\int_0^1\int_{T_t^{-1}(B_\beta)}\left|Tz-z\right|^p\,\rho(T_tz,t)\,|\det \nabla T_tz|\,dzdt\\
&=
\int_0^1\int_{T_t^{-1}(B_\beta)}\left|Tz-z\right|^p\,\rho_0(z)\,dzdt\quad \text{from \eqref{eq:formula for rho(x,t)}}\\
&\leq
\int_0^1\int_{B_{\beta'}}\left|Tz-z\right|^p\,\rho_0(z)\,dzdt\quad \text{from \eqref{eq:inclusion T_t^-1 ball contained into a ball} for $\beta<\beta'<1$}
\end{align*}
assuming $\mathcal E=\int_{B_1(0)}|Tx-x|^p\,dx$ is sufficiently small.
Here we have assumed that $\rho_0(1)=1$ and $\rho_0\approx 1$ in $B_1$.
 
On the other hand, if $\beta''<\beta$ it follows from \eqref{eq:main L infinity estimate in a general ball} that
\[
\sup_{|x|\leq \beta''}|T_tx|\leq \beta''+\sup_{|x|\leq \beta''}|Tx-x|
\leq
\beta''+\mathcal E^{\text{power}>0}<\beta,
\]
for $\mathcal E$ sufficiently small and therefore
\[
\int_0^1\int_{B_{\beta''}}\left|Tz-z\right|^p\,\rho_0(z)\,dzdt
\leq
\int_0^1\int_{B_\beta}\dfrac{1}{\rho(x,t)^{p-1}}|j(x,t)|^p\,dxdt
\leq
\int_0^1\int_{B_{\beta'}}\left|Tz-z\right|^p\,\rho_0(z)\,dzdt,
\]
for $j$ in \eqref{eq:optimal j}.

\section{Differentiability of Monotone maps}\label{sec:diff monotone maps}
When $T$ is monotone in the standard sense, the idea used in the proof of Theorem \ref{thm:main Linfty estimate} can be implemented in a simpler way to obtain the following estimates for $T$ minus a general affine function.

\begin{lemma}\label{lm:estimates of T with A}
Let $A\in \R^{n\times n}$, $b\in \R^n$, $T$ a monotone operator, $0<\beta<1$, and $u(x)=Tx-Ax-b$.
Then there are positive constants $C_1,C_2$ depending only on the dimension $n$ such that 
\begin{enumerate}
\item[(a)] for $A\neq 0$ we have
\[
\sup_{y\in B_{\beta R}(x_0)}|u(y)|
\leq
C_1\,\(\|A\|\,R\)^{n/(n+1)}\, \(\fint_{B_R(x_0)}|u(x)|\,dx\)^{1/{(n+1)}}
\] 
if 
\[
\dfrac{1}{R}\,\fint_{B_R(x_0)}|u(x)|\,dx\leq
C_2 \,\|A\|\,\(\dfrac{1-\beta}{2}\)^{n+1};
\]
and
\[
\sup_{y\in B_{\beta\,R}(x_0)}|u(y)|\leq
C_1\,\(\(\dfrac{2}{1-\beta}\)^n\,\fint_{B_R(x_0)}|u(x)|\,dx
+(1-\beta)\,R\,\|A\|\)
\]
if 
\[
\dfrac{1}{R}\,\fint_{B_R(x_0)}|u(x)|\,dx\geq
C_2 \,\|A\|\,\(\dfrac{1-\beta}{2}\)^{n+1}.
\]
\item[(b)]
if $A=0$, then
\[
\sup_{y\in B_{\beta\,R}(x_0)}|u(y)|\leq
C_1\,\(\dfrac{2}{1-\beta}\)^n\,\fint_{B_R(x_0)}|u(x)|\,dx.
\]

\end{enumerate}
\end{lemma}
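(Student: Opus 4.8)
The plan is to follow the strategy of the proof of Theorem \ref{thm:main Linfty estimate}, with the simplification that for the standard notion of monotonicity the auxiliary function playing the role of $-G(x-y,u(y))$ is \emph{affine in $x$}, hence harmonic; consequently the third Green identity \eqref{eq:third Green identity} collapses to the ordinary mean value property, and the troublesome correction term $B$ of that proof does not occur at all.

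First I would rewrite monotonicity in terms of $u$. From $u(x)-u(y)=Tx-Ty-A(x-y)$ and $\langle Tx-Ty,\,x-y\rangle\ge 0$ one gets $\langle u(x)-u(y),\,x-y\rangle\ge -\langle A(x-y),x-y\rangle\ge -\|A\|\,|x-y|^2$; the vector $b$ drops out. Equivalently, $\langle x-y,\,u(y)\rangle\le \langle x-y,\,u(x)\rangle+\|A\|\,|x-y|^2$; here $u\in L^1_{\mathrm{loc}}$ is implicit in the statement, and the inequality is used for a.e.\ $x$ with $y$ fixed off a null set, so the supremum in the conclusion is understood as an essential supremum. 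Now fix such a $y$ with $u(y)\ne 0$, the case $u(y)=0$ being trivial, set $\omega=u(y)/|u(y)|$ and, for $r>0$, put $z=y+r\,\omega$, so that $|z-y|=r$. Applying the mean value property to the harmonic function $x\mapsto\langle x-y,\,u(y)\rangle$ gives
\[
r\,|u(y)|=\langle z-y,\,u(y)\rangle=\fint_{B_r(z)}\langle x-y,\,u(y)\rangle\,dx
\le \fint_{B_r(z)}\Big(\langle x-y,\,u(x)\rangle+\|A\|\,|x-y|^2\Big)\,dx .
\]
Since $|x-y|\le 2r$ on $B_r(z)$, the right-hand side is at most $2r\fint_{B_r(z)}|u(x)|\,dx+4\,\|A\|\,r^2$, hence $|u(y)|\le 2\fint_{B_r(z)}|u(x)|\,dx+4\,\|A\|\,r$. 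For $y\in B_{\beta R}(x_0)$ and $0<r\le\tfrac{1-\beta}{2}R$ one has $B_r(z)\subset B_R(x_0)$, so $\fint_{B_r(z)}|u|\le (R/r)^n\fint_{B_R(x_0)}|u|$, giving
\[
\sup_{y\in B_{\beta R}(x_0)}|u(y)|\ \le\ \inf_{0<r\le (1-\beta)R/2}H(r),
\qquad
H(r):=2\Big(\tfrac{R}{r}\Big)^{\!n}\fint_{B_R(x_0)}|u(x)|\,dx+4\,\|A\|\,r .
\]

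The remaining step is the elementary minimization already performed at the end of the proof of Theorem \ref{thm:main Linfty estimate}. Writing $H(r)=a\,r^{-n}+c\,r$ with $a=2R^n\fint_{B_R(x_0)}|u|$ and $c=4\|A\|$, the minimum over $(0,\infty)$ is at $r_0=(na/c)^{1/(n+1)}$ and $H$ decreases on $(0,r_0)$. When $A\ne 0$ there are two cases. If $r_0\le\tfrac{1-\beta}{2}R$ --- which is exactly the hypothesis $\tfrac1R\fint_{B_R(x_0)}|u|\le C_2\,\|A\|\big(\tfrac{1-\beta}{2}\big)^{n+1}$ for a suitable dimensional $C_2$ --- then the infimum equals $H(r_0)$, which up to a dimensional constant is $(\|A\|R)^{n/(n+1)}\big(\fint_{B_R(x_0)}|u|\big)^{1/(n+1)}$, i.e.\ the first bound in (a). If $r_0>\tfrac{1-\beta}{2}R$, the infimum is $H\big(\tfrac{1-\beta}{2}R\big)=2\big(\tfrac{2}{1-\beta}\big)^n\fint_{B_R(x_0)}|u|+2(1-\beta)\|A\|R$, i.e.\ the second bound in (a). Finally, part (b) is the case $A=0$: then $c=0$, $H$ is decreasing, and the infimum is $H\big(\tfrac{1-\beta}{2}R\big)=2\big(\tfrac{2}{1-\beta}\big)^n\fint_{B_R(x_0)}|u|$.

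I do not anticipate a genuine obstacle here: the whole difficulty of Theorem \ref{thm:main Linfty estimate} --- the estimate of the correction term and the choice of the smallness parameter $\delta$ --- disappears, since harmonicity makes the mean value identity exact. The only points that need a little care are the measure-theoretic bookkeeping (monotonicity a.e., essential supremum), the trivial case $u(y)=0$, the elementary inclusion $B_r(y+r\,\omega)\subset B_R(x_0)$ valid for $r\le\tfrac{1-\beta}{2}R$, and matching the threshold $r_0\le\tfrac{1-\beta}{2}R$ with the stated hypothesis on $\tfrac1R\fint_{B_R(x_0)}|u|$ while tracking the dimensional constants $C_1,C_2$.
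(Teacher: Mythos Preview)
Your proposal is correct and follows essentially the same approach as the paper: rewrite monotonicity as $\langle x-y,u(y)\rangle\le \langle x-y,u(x)\rangle+\|A\|\,|x-y|^2$, apply the mean value property to the affine function $x\mapsto\langle x-y,u(y)\rangle$ over the shifted ball $B_r(y+r\,\omega)$, use $|x-y|\le 2r$ and the inclusion $B_r(y+r\,\omega)\subset B_R(x_0)$ for $r\le(1-\beta)R/2$, and then minimize the resulting $a\,r^{-n}+c\,r$ with the same two-case analysis. Your $H(r)$ coincides with the paper's $F(r)$ after rewriting $\frac{2}{\omega_n r^n}\int_{B_R(x_0)}|u|$ as $2(R/r)^n\fint_{B_R(x_0)}|u|$, so the constants and thresholds match.
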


\begin{proof}
By monotonicity of $T$,
 \begin{equation}\label{monotony}(u(x)-u(y))\cdot(x-y)\ge -\langle A(x-y),x-y\rangle,\quad \text{for a.e. $x, y$,}
 \end{equation}
which implies 
\[
f(x):=u(y)\cdot (x-y)\leq u(x)\cdot (x-y)+\langle A(x-y),x-y\rangle.
\]
Let $r>0$ and $z_r\in \R^n$ both to be determined, and consider the ball $B_r(z_r)$.
The function $f$ is harmonic in all space so integrating the last inequality for $x$ over $B_r(z_r)$ and applying the mean value theorem yields
\begin{align*}
u(y)\cdot (z_r-y) 
&\leq 
\fint_{B_r(z_r)}u(x)\cdot (x-y)\,dx+ \fint_{B_r(z_r)} \langle A(x-y),x-y\rangle\,dx\\
&\leq
\fint_{B_r(z_r)}|u(x)|\,|x-y|\,dx+\|A\|\, \fint_{B_r(z_r)} |x-y|^2\,dx\\
&=B+C.
\end{align*}
Fix $x_0$, $R>0$, and pick $r>0$, $z_r=y+r\,\dfrac{u(y)}{|u(y)|}$ such that $B_r(z_r)\subset B_R(x_0)$; $u(y)\neq 0$. If $y\in B_{\beta R}(x_0)$, then the inclusion holds if $r< (1-\beta)\,R/2$.
Also, if $x\in B_r(z_r)$, then $|x-y|\leq 2r$.
Hence
\[
B
\leq \dfrac{2\,r}{\omega_n\,r^n}\int_{B_R(x_0)}|u(x)|\,dx,\qquad
C\leq 4\,\|A\|\,r^2,
\]
and consequently
\[
|u(y)|\leq \dfrac{2}{\omega_n\,r^n}\int_{B_R(x_0)}|u(x)|\,dx 
+4\,\|A\|\,r:=F(r)\qquad \forall y\in B_{\beta R}(x_0);\quad r\leq (1-\beta)\,R/2.
\]
We then obtain
\[
\sup_{y\in B_{\beta R}(x_0)}|u(y)|
\leq
\min \left\{F(r):0<r\leq (1-\beta)\,R/2\right\}:=m.
\]
Suppose $A\neq 0$. Set $\Delta=\dfrac{2}{\omega_n} \int_{B_R(x_0)}|u(x)|\,dx$,
so $F(r)=\dfrac{1}{r^n}\,\Delta +4\,\|A\|\,r$. 
We have $F'(r)=-n\,r^{-(n+1)}\Delta + 4\,\|A\|=0$ for $r=r_0:=\(\dfrac{n\,\Delta}{4\,\|A\|}\)^{1/(n+1)}$.
So 
\begin{align*}
&\min \{F(r):0<r<+\infty\}=F(r_0)\\
&=
\(\dfrac{4\|A\|}{n\,\Delta}\)^{n/(n+1)}\,\Delta +
4\,\|A\|\,\(\dfrac{n\,\Delta}{4\|A\|}\)^{1/(n+1)}\\
&=
C_n
\,\|A\|^{n/(n+1)}
\,\(\int_{B_R(x_0)}|u(x)|\,dx\)^{1/{(n+1)}}.
\end{align*}
If $r_0<\dfrac{1-\beta}{2}\,R$, then $m\leq F(r_0)$ and we obtain
\begin{equation}\label{eq:main estimate of u}
\sup_{y\in B_{\beta R}(x_0)}|u(y)|
\leq
C_n\,\(\|A\|\,R\)^{n/(n+1)}\, \(\fint_{B_R(x_0)}|u(x)|\,dx\)^{1/{(n+1)}}
\end{equation}
when $C_n\,\dfrac{1}{\|A\|\,R}\,\fint_{B_R(x_0)}|u(x)|\,dx\leq
\(\dfrac{1-\beta}{2}\)^{n+1}$; 
in such a case we get 
\[
\sup_{y\in B_{\beta R}(x_0)}|u(y)|\leq
C_n\,(1-\beta)\,\|A\|\,R.
\]
On the other hand, if $\dfrac{1-\beta}{2}R\leq r_0$,  then $m=F\(\dfrac{1-\beta}{2}R\)$
and we get
\[
\sup_{y\in B_{\beta\,R}(x_0)}|u(y)|\leq
C_n\,\(\dfrac{2}{1-\beta}\)^n\,\fint_{B_R(x_0)}|u(x)|\,dx
+C_n\,\,(1-\beta)\,R\,\|A\| 
\]
when
$C_n\,\dfrac{1}{\|A\|\,R}\,\fint_{B_R(x_0)}|u(x)|\,dx\geq
\(\dfrac{1-\beta}{2}\)^{n+1}$.

If $A=0$, then $F(r)=\dfrac{1}{r^n}\,\Delta$ is decreasing 
and so 
\[
\sup_{y\in B_{\beta\,R}(x_0)}|u(y)|\leq
m
=
C_n\,\(\dfrac{2}{1-\beta}\)^n\,\fint_{B_R(x_0)}|u(x)|\,dx.
\]

\end{proof}

Using part (b) of this lemma we will show strong differentiability of monotone maps.
Following Calder\'on and Zygmund \cite{Cal-Z}, see also \cite[Sect. 3.5]{Zim}, we recall the notion of differentiability in $L^p$-sense.
\begin{definition}
Let $1\leq p\leq \infty$, $k$ is a positive integer and $f\in L^p(\Omega)$, with $\Omega\subset \R^n$ open, and let $x_0\in \Omega$.
We say that $f\in T^{k,p}(x_0) \(f\in t^{k,p}(x_0)\)$ if there exists a polynomial $P_{x_0}$ of degree  $\leq k-1 \(\text{$P_{x_0}$ of degree }\leq k\)$ such that 
\begin{align*}
&\(\fint_{B_r(x_0)} |f(x)-P_{x_0}(x)|^p\,dx\)^{1/p}=O(r^k)\quad \text{as $r\to 0$}\\
&\(\(\fint_{B_r(x_0)} |f(x)-P_{x_0}(x)|^p\,dx\)^{1/p}=o(r^k)\quad \text{as $r\to 0$}\);
\end{align*}
when $p=\infty$ the averages are replaced by $\text{\rm ess sup}_{x\in B_r(x_0)}|f(x)-P_{x_0}(x)|=\|f-P_{x_0}\|_{L^\infty \(B_r(x_0)\)}$.
\end{definition}
We mention the following landmark result of Calder\'on and Zygmund \cite[Thm. 5]{Cal-Z}, see also \cite[Thm. 3.8.1]{Zim} or \cite[Chap. VIII, Sect. 6.1]{Stein}:
\begin{theorem}\label{thm:calderon-zygmund}
If $1<p\leq \infty$ and $f\in T^{k,p}(x_0)$ for all $x_0\in E$ with $E\subset \R^n$ measurable, then
$f\in t^{k,p}(x_0)$ for almost all $x_0\in E$; emphasizing that the orders of magnitude are not necessarily uniform in $x_0$\footnote{Whether this result holds when $p=1$ does not seem available in the literature.}.
\end{theorem}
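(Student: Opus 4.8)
The statement is a classical theorem of Calder\'on and Zygmund, so the plan is to reproduce the standard scheme: upgrade the pointwise hypothesis to a uniform bound on a large subset, show that the approximating polynomials vary Whitney-regularly with the base point, and add a degree-$k$ correction term coming from the a.e.\ differentiability of the top-order coefficients. \emph{Reduction.} For $x_0\in E$ the hypothesis $f\in T^{k,p}(x_0)$ furnishes a polynomial $P_{x_0}$ of degree $\le k-1$ with $\big(\fint_{B_r(x_0)}|f-P_{x_0}|^p\big)^{1/p}\le M(x_0)\,r^k$ for $0<r\le\rho(x_0)$; writing $E=\bigcup_m E_m$ with $E_m=\{x_0\in E:\ M(x_0)\le m,\ \rho(x_0)\ge 1/m\}$, it suffices to treat a fixed $F:=E_m$, and by discarding a null set I may assume every point of $F$ has density $1$ in $F$. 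Set $M=m$, $\rho_0=1/m$.

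\emph{Whitney regularity of the polynomial field.} For $x_0,x_1\in F$ with $d:=|x_0-x_1|\le\rho_0/2$ the balls $B_{Cd}(x_0)$ and $B_{Cd}(x_1)$ contain a common ball $B$ of radius $\sim d$, and subtracting the two $L^p$-estimates on $B$ gives $\big(\fint_B|P_{x_0}-P_{x_1}|^p\big)^{1/p}\lesssim M d^k$. Since $P_{x_0}-P_{x_1}$ has degree $\le k-1$, equivalence of norms on a finite-dimensional space together with scaling bounds all of its coefficients on $B$; recentering at $x_0$ this yields $|D^\alpha P_{x_0}(x_0)-D^\alpha P_{x_1}(x_1)-(\text{Taylor-shift terms})|\lesssim M d^{\,k-|\alpha|}$ for $|\alpha|\le k-1$. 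Hence the jet $(D^\alpha P_{x_0}(x_0))_{|\alpha|\le k-1}$ satisfies Whitney's compatibility conditions of order $k-1$ on $F$ with remainder $O(d^{\,k-|\alpha|})$; in particular the top coefficients $a_\alpha(x_0):=D^\alpha P_{x_0}(x_0)$, $|\alpha|=k-1$, are Lipschitz on $F$. By the Whitney extension theorem and Rademacher's theorem these are differentiable at a.e.\ point of $F$, and feeding the derivatives into $P_{x_0}$ produces a polynomial $\widetilde P_{x_0}$ of degree $\le k$ with $\|\widetilde P_{x_0}-P_{x_1}\|_{L^\infty(B_r(x_0))}=o(r^k)$ as $r\to 0$, uniformly over $x_1\in F\cap B_r(x_0)$.

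\emph{Conclusion.} Fix $x_0\in F$ at which the above differentiability holds, which is a Lebesgue point of $f$, and at which $f(x_0)=a_0(x_0)=\widetilde P_{x_0}(x_0)$ (the last from $T^{k,p}(x_0)$ together with the Lebesgue-point property). Decompose $B_r(x_0)\setminus F=\bigcup_i Q_i$ into Whitney cubes with $\ell_i:=\diam Q_i\sim\dist(Q_i,F)$, and pick $x_i\in F$ with $\dist(x_i,Q_i)\lesssim\ell_i$, so $Q_i\subset B_{C\ell_i}(x_i)$ and $x_i\in B_{Cr}(x_0)$. On $Q_i$ one has $\int_{Q_i}|f-P_{x_i}|^p\le\int_{B_{C\ell_i}(x_i)}|f-P_{x_i}|^p\lesssim M^p\ell_i^{\,kp+n}$ and $\int_{Q_i}|P_{x_i}-\widetilde P_{x_0}|^p\lesssim\ell_i^{\,n}\,o(r^{kp})$; summing and using $\sum_i\ell_i^{\,n}\le|B_r(x_0)\setminus F|=o(r^n)$ (density point) gives $\int_{B_r(x_0)\setminus F}|f-\widetilde P_{x_0}|^p=o(r^{kp+n})$. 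On $B_r(x_0)\cap F$, a.e.\ point $x_1$ is a Lebesgue point with $f(x_1)=a_0(x_1)=\widetilde P_{x_0}(x_1)+o(r^k)$, so that piece contributes $o(r^{kp+n})$ as well; hence $\big(\fint_{B_r(x_0)}|f-\widetilde P_{x_0}|^p\big)^{1/p}=o(r^k)$, i.e.\ $f\in t^{k,p}(x_0)$. For $p=\infty$ I would run the same argument with essential suprema in place of the averages.

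\emph{Main obstacle.} The hard part will be the passage from the purely averaged hypothesis to pointwise/a.e.\ information: manufacturing a genuine Whitney jet on $F$ out of $L^p$-average estimates, and — above all — controlling $f$ on the part $B_r(x_0)\cap F$ of possibly positive measure, where a priori one knows only averages. This is precisely where $p>1$ enters, through a Hardy--Littlewood maximal-function/density estimate that degenerates at the weak-type endpoint, which is why the $p=1$ case remains open, as the footnote indicates.
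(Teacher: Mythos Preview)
The paper does not prove this theorem at all: it is stated as a citation (``the following landmark result of Calder\'on and Zygmund \cite[Thm.~5]{Cal-Z}, see also \cite[Thm.~3.8.1]{Zim} or \cite[Chap.~VIII, Sect.~6.1]{Stein}'') and then used as a black box. In fact the paper only invokes the special case $p=\infty$, $k=1$, which is Stepanov's theorem, in the proof of Theorem~\ref{thm:differentiability of monotone maps}. So there is no ``paper's own proof'' to compare against; your sketch is simply an outline of the classical Calder\'on--Zygmund argument that the paper is quoting.

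As a sketch of that classical argument your outline is on the right track --- uniformization via $E=\bigcup_m E_m$, extraction of a Whitney-compatible jet on $F$, Whitney extension plus Rademacher to differentiate the top coefficients, and a Whitney decomposition of $B_r(x_0)\setminus F$ --- but a few steps are stated too glibly to stand alone. The claim $\|\widetilde P_{x_0}-P_{x_1}\|_{L^\infty(B_r(x_0))}=o(r^k)$ uniformly in $x_1\in F\cap B_r(x_0)$ needs the full Whitney compatibility hierarchy (not just Lipschitz continuity of the top coefficients) and a careful bookkeeping of how differentiability of the $|\alpha|=k-1$ coefficients propagates down through the Taylor shifts; and on $B_r(x_0)\cap F$ your pointwise estimate $f(x_1)=\widetilde P_{x_0}(x_1)+o(r^k)$ must be turned into an honest $L^p$ bound, which is exactly where the maximal-function machinery for $p>1$ is typically deployed --- you correctly flag this as the crux, but in the body of the argument you bypass it rather than carry it out.
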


The case when $p=\infty$ is a famous theorem of Stepanov which combined with Lemma \ref{lm:estimates of T with A}(b) yields immediately the following point-wise differentiability of monotone maps.

\begin{theorem}\label{thm:differentiability of monotone maps}
Let $T$ be a monotone map that is locally in $L^1\(\R^n\)$
\footnote{In general, $T$ is a multivalued map. However, the monotonicity implies that $Tx$ is a singleton for a.e. $x\in \R^n$.
Denote $\text{dom }T=\{x\in \R^n: Tx\neq \emptyset \}$. From \cite[Corollary 12.38]{RW:variational analysis}, a maximal monotone mapping $T$ is locally bounded at $\bar x$ if and only if $\bar x$ is not a boundary point of $\overline{\text{dom }T}$. Also from \cite[Thm. 12.63]{RW:variational analysis}, if $T$ is maximal monotone, then $T$ is continuous at $\bar x$ if and only if $T$ is single valued at $\bar x$, in which case necessarily $\bar x\in \text{int }\(\text{dom }T\)$. For a clear and in depth presentation of the properties of monotone maps we recommend the comprehensive book \cite{RW:variational analysis}.} 
satisfying 
\begin{equation}\label{eq:L2-integrability of T}
\fint_{B_R(x_0)}|Tx-b|\,dx=O\(R\)\quad \text{as $R\to 0$} 
\end{equation}
for some vector $b=b_{x_0}$, i.e, $Tx\in T^{1,1}(x_0)$ for all $x_0$ in a measurable set $E$.
Then  
\[
\|Tx-A(x-x_0)-Tx_0\|_{L^\infty\(B_R(x_0)\)}=o\(R\)\quad \text{as $R\to 0$}
\] 
for almost all $x_0\in E$, i.e., $Tx\in t^{1,\infty}(x_0)$ for a.e. $x_0\in E$.

\end{theorem}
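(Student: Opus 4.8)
The plan is to combine Lemma \ref{lm:estimates of T with A}(b), the Calder\'on--Zygmund result Theorem \ref{thm:calderon-zygmund}, and Stepanov's differentiability theorem. First, I would observe that the hypothesis $Tx\in T^{1,1}(x_0)$ for all $x_0\in E$ is precisely the statement that \eqref{eq:L2-integrability of T} holds, i.e. that there is a constant polynomial $P_{x_0}(x)=b_{x_0}$ of degree $\leq 0$ with $\fint_{B_R(x_0)}|Tx-b_{x_0}|\,dx=O(R)$. Applying Theorem \ref{thm:calderon-zygmund} with $k=1$ and $p=1$ would be the natural move; however, as the footnote to that theorem warns, the $p=1$ case is not available. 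So instead I would first \emph{upgrade the integrability exponent}: the point is that Lemma \ref{lm:estimates of T with A}(b) converts an $L^1$-average bound into an $L^\infty$ bound, which in particular gives an $L^p$ bound for every $p$. Concretely, fix $x_0\in E$ and $\beta=1/2$; Lemma \ref{lm:estimates of T with A}(b) applied to $u(x)=Tx-b_{x_0}$ on $B_R(x_0)$ yields
\[
\sup_{y\in B_{R/2}(x_0)}|Ty-b_{x_0}|\leq C_1\,2^n\,\fint_{B_R(x_0)}|Tx-b_{x_0}|\,dx=O(R),
\]
so $Tx\in T^{1,\infty}(x_0)$ for every $x_0\in E$ (with the same constant polynomial $b_{x_0}$).

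Next, with the exponent now $p=\infty$, the obstruction disappears: I would invoke Theorem \ref{thm:calderon-zygmund} with $k=1$ and $p=\infty$ to conclude that $Tx\in t^{1,\infty}(x_0)$ for almost every $x_0\in E$. Unwinding the definition of $t^{1,\infty}(x_0)$, this means that for a.e. $x_0\in E$ there is a polynomial $P_{x_0}$ of degree $\leq 1$, say $P_{x_0}(x)=A_{x_0}(x-x_0)+c_{x_0}$ with $A_{x_0}\in\R^{n\times n}$ and $c_{x_0}\in\R^n$, such that
\[
\|Tx-P_{x_0}(x)\|_{L^\infty(B_R(x_0))}=o(R)\quad\text{as }R\to 0.
\]
Letting $R\to 0$ forces $c_{x_0}=Tx_0$ (the limit of the essential supremum pins down the value, using that $x_0$ is, after shrinking $E$ by a null set, a Lebesgue point of $T$ and that $T$ is single-valued a.e. as noted in the footnote). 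This is exactly the asserted estimate
\[
\|Tx-A_{x_0}(x-x_0)-Tx_0\|_{L^\infty(B_R(x_0))}=o(R)\quad\text{as }R\to 0,
\]
i.e. strong (pointwise, $L^\infty$-sense) differentiability of $T$ at $x_0$. Equivalently, one may phrase the last step as an appeal to Stepanov's theorem: the bound $Tx\in T^{1,\infty}(x_0)$ for $x_0\in E$ gives a pointwise Lipschitz-type control of $T$ relative to the affine comparison on $E$, and Stepanov's theorem then provides a.e. differentiability on $E$.

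The main obstacle is the one the paper itself flags: the Calder\'on--Zygmund passage from $T^{k,p}$ to $t^{k,p}$ is not known for $p=1$, so one cannot apply it directly to the hypothesis. The device that resolves this is precisely Lemma \ref{lm:estimates of T with A}(b), whose role is to bootstrap the $L^1$-average control into an $L^\infty$ control \emph{at every point of $E$} before invoking the Calder\'on--Zygmund/Stepanov machinery; the monotonicity of $T$ is what makes that lemma available. The remaining points — identifying the constant term of the first-order Taylor polynomial with $Tx_0$, and handling the a.e.\ single-valuedness of $T$ — are routine given the structural facts recalled in the footnote.
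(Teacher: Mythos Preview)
Your proposal is correct and follows essentially the same route as the paper: upgrade the hypothesis $T^{1,1}(x_0)$ to $T^{1,\infty}(x_0)$ via Lemma \ref{lm:estimates of T with A}(b), then apply the $p=\infty$ case of Theorem \ref{thm:calderon-zygmund}, which (as the paper notes just before the theorem) is precisely Stepanov's theorem. Your additional remarks about the $p=1$ obstruction and the identification $c_{x_0}=Tx_0$ are correct elaborations the paper leaves implicit.
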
 
\begin{proof}
For each $x_0\in E$ there exist constants $M(x_0)\geq 0$, $R_0>0$ and $b\in \R^n$ such that
\[
\fint_{B_R(x_0)}|Tx-b|\,dx\leq M(x_0)\,R
\]
for all $0<R<R_0$, i.e., $Tx\in T^{1,1}(x_0)$.
Since $T$ is monotone, from Lemma \ref{lm:estimates of T with A}(b)
\[
\sup_{B_{\beta R}(x_0)}|Tx-b|\leq
C(n,\beta)\,\fint_{B_R(x_0)}|Tx-b|\,dx
\leq
C(n,\beta)\,M(x_0)\,R
\]
for $0<R<R_0$. This means $\sup_{B_{R}(x_0)}|Tx-b|=O(R)$ as $R\to 0$ for all $x_0\in E$, i.e., $Tx\in T^{1,\infty}(x_0)$.
By Stepanov's theorem \cite[Chap. VIII, Thm. 3, p. 250]{Stein} this implies that 
$Tx$ is differentiable for a.e. $x_0\in E$, i.e., $Tx\in t^{1,\infty}(x_0)$ for a.e. $x_0\in E$.
\end{proof}

\begin{remark}\label{rmk:integral diff implies T11}\rm
Notice that $\fint_{B_R(x_0)}|Tx-Ax-b|\,dx=o\(R\)$ (or $Tx\in t^{1,1}(x_0)$) implies \eqref{eq:L2-integrability of T}  because
if $x_0$ is a Lebesgue point, then  $b=Tx_0-Ax_0$ and
\begin{align*}
\fint_{B_R(x_0)}|Tx-c|\,dx
&=\fint_{B_R(x_0)}|Tx-Ax-b+Ax+b-c|\,dx\\
&\leq
\fint_{B_R(x_0)}|Tx-Ax-b|\,dx
+
\fint_{B_R(x_0)}|Ax+b-c|\,dx\\
&=
o(R)
+
\fint_{B_R(x_0)}|A(x-x_0)|\,dx,\quad \text{if $c=Tx_0$}\\
&\leq
o(R)+\|A\|\,R=O(R).
\end{align*}
\end{remark}
\begin{remark}\rm\label{rmk:mignot ambrosio and krylov results}
When $T$ is a monotone map that is maximal, the differentiability of $T$ a.e. was proved by Mignot \cite[Thm. 3.1]{Mig} using Sard's Theorem; see also the more recent and perhaps simpler proof of Alberti and Ambrosio \cite[Thm. 3.2]{AA}.
When $T$ is monotone and bounded the differentiability is proved in \cite[Thm. 2.2]{K83}.
\end{remark}

\begin{remark}\rm
If $\phi$ is a convex function in $\R^n$, then from \cite[Thm. 3, p. 240]{evans-gariepy} $\nabla \phi\in BV_{\text{loc}}(\R^n)$. Therefore, from \cite[Thm. 1, p. 228]{evans-gariepy} $\nabla \phi$ is $L^{n/(n-1)}$-differentiable a.e., that is $\nabla \phi\in t^{1,n/(n-1)}(x)$ a.e., and since $\nabla \phi$ is monotone, it follows from Remark \ref{rmk:integral diff implies T11} 
and Theorem \ref{thm:differentiability of monotone maps} that $\nabla \phi\in t^{1,\infty}(x)$ a.e.
\end{remark}

\begin{remark}\rm\label{eq:ambrosio result bounded deformation}
Following \cite{ACDM}, a locally integrable mapping $u:\R^n\to \R^n$ is of bounded deformation ($u\in BD$) if the symmetrized gradient in the sense of distributions $\nabla u+(\nabla u)^t$ is a Radon measure. 
If $T=(T_1,\cdots ,T_n)$ is a monotone map in $L^1_{\text{loc}}(\R^n)$, it then follows from the definitions of monotonicity and distributional derivative that $T\in BD$.
Because all distributional derivatives $\dfrac{\partial T_i}{\partial x_j}$ are non negative and therefore they are Radon measures. 
From \cite[Theorem 7.4]{ACDM}, if $T\in BD$, then $T\in t^{1,1}(x_0)$ for a.e. $x_0\in \R^n$. Therefore from Remark \ref{rmk:integral diff implies T11}, condition \eqref{eq:L2-integrability of T} holds
for any locally integrable monotone map.
\end{remark}
\begin{remark}\rm
For completeness we also prove the following known fact: if $f\in L^p_{\text{loc}}(\R^n)$, with $p\geq 1$, then 
\[
\lim_{r\to 0}\(\fint_{B_r(x_0)}|f(x)-f(x_0)|^p\,dx \)^{1/p}=0\quad \text{for a.e. $x_0$}.
\]
Define 
\[
\Lambda f(x_0)=\limsup_{r\to 0}\(\fint_{B_r(x_0)}|f(x)-f(x_0)|^p\,dx \)^{1/p}.
\]
We have $0\leq
\Lambda f(x_0)\leq
\limsup_{r\to 0}\(\fint_{B_r(x_0)}|f(x)|^p\,dx \)^{1/p}+|f(x_0)|\leq 
\(M(|f|^p)(x_0)\)^{1/p}+|f(x_0)|$ with $M$ the Hardy-Littlewood maximal function. Since $f\in L^p_{\text{loc}}(\R^n)$, the right hand side of the last inequality is finite for a.e. $x_0$ and so $\Lambda f(x_0)$ is finite for a.e. $x_0$.
In addition, $\Lambda$ is sub-linear: $\Lambda (f+g)(x_0)\leq \Lambda f(x_0)+\Lambda g(x_0)$ and $\Lambda g(x_0)=0$ for each $g$ continuous at $x_0$. By localizing $f$ with a compact support function we may assume $f\in L^p(\R^n)$. Given $\e>0$ there exists $g\in C(\R^n)$ such that $\|f-g\|_p\leq \e$.
For each $\a>0$ we then have
\begin{align*}
\{x:\Lambda f(x)>\a\}
&\subset 
\{x:\Lambda (f-g)(x)>\a/2\}\cup \{x:\Lambda g(x)>\a\}
=
\{x:\Lambda (f-g)(x)>\a/2\}\\
&\subset
\{x:\(M(|f-g|^p)(x)\)^{1/p}>\a/4\}\cup \{x:|f(x)-g(x)|>\a/4\}
\end{align*}
and so
\begin{align*}
|\{x:\Lambda f(x)>\a\}|
&\leq
|\{x:M(|f-g|^p)(x)>(\a/4)^p\}|
+
|\{x:|f(x)-g(x)|>\a/4\}|\\
&\leq
\dfrac{C_n}{\a^p}\|f-g\|_p^p+\dfrac{4^p}{\a^p}\|f-g\|_p^p\leq \dfrac{C}{\a^p}\,\e^p.
\end{align*}
Since $\e$ is arbitrary, we obtain $\Lambda f(x)= 0$ for a.e. $x$.
\end{remark}

\section{Appendix}
\setcounter{equation}{0}
Recall that $\Gamma(x)=\dfrac{1}{n\omega_n (2-n)}|x|^{2-n}$, with $n>2$ where $\omega_n$ is the volume of the unit ball in $\R^n$, and  
the Green's representation formula
\[
v(y)=\int_{\partial \Omega} \(v(x)\,\dfrac{\partial \Gamma}{\partial \nu}(x-y)-\Gamma(x-y)\,\dfrac{\partial v}{\partial \nu}(x)\)\,d\sigma(x)+\int_\Omega \Gamma(x-y)\,\Delta v(x)\,dx
\]
where $\nu$ is the outer unit normal and $y\in \Omega$.
If $\Omega=B_\rho(y)$, then 
$\dfrac{\partial \Gamma}{\partial \nu}(x-y)=\dfrac{1}{n\,\omega_n}\,|x-y|^{1-n}$ and so the representation formula reads
\begin{align*}
v(y)
&=
\fint_{|x-y|=\rho}v(x)\,d\sigma(x)-\Gamma(\rho)\,\int_{|x-y|=\rho} \dfrac{\partial v}{\partial \nu}(x)\,d\sigma(x)
+\int_{|x-y|\leq \rho} \Gamma(x-y)\,\Delta v(x)\,dx\\
&=
\fint_{|x-y|=\rho}v(x)\,d\sigma(x)
+\int_{|x-y|\leq \rho} \(\Gamma(x-y)-\Gamma(\rho)\)\,\Delta v(x)\,dx
\end{align*}
from the divergence theorem.
Multiplying the last identity by $\rho^{n-1}$ and integrating over $0\leq \rho\leq r$ yields
\begin{equation}\label{eq:third Green identity}
v(y)=\fint_{|x-y|\leq r}v(x)\,dx+\dfrac{n}{r^n}\,\int_0^r \rho^{n-1}\int_{|x-y|\leq \rho}\(\Gamma(x-y)-\Gamma(\rho)\)\,\Delta v(x)\,dx\,d\rho.
\end{equation}

\end{document}